\newtheorem{theorem}{Theorem}[section]
\newtheorem{proposition}[theorem]{Proposition}
\newtheorem{corollary}[theorem]{Corollary}
\newtheorem{lemma}[theorem]{Lemma}
\theoremstyle{definition}
\newtheorem{definition}[theorem]{Definition}
\newtheorem{example}[theorem]{Example}
\newtheorem{remark}[theorem]{Remark}
\numberwithin{equation}{section}
\numberwithin{theorem}{section}
\numberwithin{equation}{section}
\begin{document}

\title{Distributions in spaces with thick submanifolds}

\author[J. Ding]{Jiajia Ding }
\address{J. Ding, School of Mathematics\\
Hefei University of Technology\\
Hefei 230009\\ China}
\email{dingjiajia@hfut.edu.cn}

\author[J. Vindas]{Jasson Vindas}
\address{J. Vindas\\ Department of Mathematics: Analysis, Logic and Discrete Mathematics\\ Ghent University\\ Krijgslaan 281\\ 9000 Ghent\\ Belgium}
\email{jasson.vindas@UGent.be}

\author[Y. Yang]{Yunyun Yang}
\address{Y. Yang, School of Mathematics\\
Hefei University of Technology\\
Hefei 230009\\ China}
\email{yangyunyun@hfut.edu.cn}

\thanks{This work was supported by the National Natural Science Foundation of China grant number 12001150,  the Hefei University of Technology grant number 407-0371000086, the Ghent University grant number bof/baf/4y/2024/01/155, and the Research Foundation–Flanders grant number
G067621N}

\subjclass[2020]{46F05, 46F10}
\keywords{Thick submanifolds; thick delta functions; thick distributions; Hadamard finite part}

\begin{abstract}
We present the construction of a theory of distributions (generalized functions) with a ``thick submanifold'', that is, a new theory of thick distributions on $\mathbb{R}^n$ whose domain contains a smooth submanifold on which the test functions may be singular. We define several operations, including ``thick partial derivatives'', and clarify their connection with their classical counterparts in Schwartz distribution theory. We also introduce and study a number of special thick distributions, including new thick delta functions, or more generally thick multilayer distributions along a submanifold.
\end{abstract}

\maketitle

\section{Introduction}

In order to solve some problems appearing in applying Schwartz distributions to certain questions in physics and engineering, Fulling, Estrada, and Yang \cite{EF2007,YE2013} constructed a theory of ``thick distributions''. This theory generalizes the classical Schwartz distribution theory: it allows for a point singularity in the domain of the test functions (and hence of the distributions). By doing so, they were able to rigorously define elements such as the ``one-sided Dirac delta function'' and its multidimensional generalizations. The new theory of thick distributions has found more and more applications in physics and partial differential equations (PDE). Thus it  has started to attract attention from researchers in different areas. Let us list a few recent applications here.

Reference \cite{antontsev2024strong} investigated the multidimensional initial-boundary value problem for the semilinear pseudoparabolic equation with a regular nonlinear minor term, which, in general, may be superlinear.   It studied the class of pseudoparabolic PDE of the form $\partial_{t}u=\Delta_{x}u+\Delta_{x}\partial_{t}u+f\left(  x,t,u\right).$ 
When $f\left(  x,t,u\right)  =-\varphi_{n}\left(  t\right)  \left\vert u\right\vert
^{q\left(  x\right)  -2}u $ they discussed regular weak and strong solutions in a multidimensional bounded domain $\Omega$
while allowing the source $f(x,t,u)$ to be superlinear with respect to $u$.
The theory of thick distributions was used to generalize the theory of impulsive partial differential and integro-differential equations.

The Hamiltonian form provides a powerful tool for calculating approximate analytical solutions to the Einstein field equations. In \cite{schafer2024hamiltonian}, compact binaries with spin components are treated by the tetrad representation of general relativity. When treating the configuration, the absolute value of the spin vector can be regarded as a constant, and the compact object is modeled by the Dirac delta function and its derivatives. 

Solutions to Maxwell's equations for arbitrary charge and current density cannot usually be derived analytically, and numerical solutions require long computational time and large amounts of memory. Thus, semi-analytical closed-form solutions (e.g., solutions involving series expansions) are sometimes needed. Reference \cite{le2024time} systematically derives time-domain semi-analytical expressions for the electromagnetic field radiated by a time-varying localized source using knowledge of thick distributions. This time-domain method is valid for any frequency and is independent of the source size.

Moreover, in \cite{Yang2022}, Yang further developed the theory of thick distributions, allowing test functions to have a singular curve in thier domain in $\mathbb{R}^3$ instead of just a singular point. Using this new tool, the author was able to clarify the information in every direction of each cross section of a very thing blood capillary, versus the previous model where the reaction and diffusion on each cross-section are identical. Thus a more refined model was proposed to modify the previous model \cite{vakoc2009three,zheng2013continuous} of reaction and diffusion of growth factors  of a very thin blood capillary in a bulk tumor. In the same article a solution of the corresponding PDE was also given using the Fourier transform of thick distributions, a theory that was developed by Estrada, Vindas, and Yang in \cite{EVY}.

This article is a generalization of \cite{Yang2022}. In this work, we consider a general ``singular submanifold'' instead of just a singular curve. The first few sections rely on some knowledge of the theory of differential manifolds; in particular, the tubular neighborhood theorem plays a central role in our constructions. In Section \ref{space test functions}, we introduce the test function spaces   $\mathcal{D}_{\ast, \Sigma}(\Omega)$ and $\mathcal{E}_{\ast, \Sigma}(\Omega)$, which are topological vector spaces of smooth functions on the open subset $\Omega\setminus \Sigma \subset \mathbb{R}^{n}$, with $\Sigma$ a closed smooth submanifold. Their elements are in general singular along $\Sigma$, but posses suitable asymptotic expansions with respect to tubular coordinates when approaching the submanifold $\Sigma$. 

The asymptotic properties of our test functions on a tubular neighborhood of $\Sigma$ are the key to defining  the corresponding spaces of thick distributions along the submanifold $\Sigma$ by duality in Section \ref{thick distributions section}.  A crucial feature of $\mathcal{D}_{\ast, \Sigma}(\Omega)$ that we establish in Section \ref{space test functions} is its invariance under partial derivative operators.
The latter allows us to define a notion of partial derivatives for $\Sigma$-thick distributions. Several other operations for our new spaces of $\Sigma$-thick distributions as well as their connection with classical Schwartz distribution theory are discussed in Section  \ref{thick distributions section}. In Section \ref{Section examples}, we give a few important concrete examples of thick distributions along a submanifold and show how to compute their partial derivatives. These instances of thick distributions include new thick delta functions, or more generally, thick multilayer distributions along $\Sigma$.

The applications we have mentioned above give an indication of the potential use of our thick distribution theory for some problems in physics and PDE theory, supplying now a new framework for treating singularities along arbitrary smooth submanifolds. Furthermore, several operations that are not well-defined for Schwartz distributions and often require cumbersome treatments in the literature, such as the radial \cite{BSV2017} or normal-type derivatives just to mention a few, can in contrast be naturally defined for thick distributions, which makes thick distribution theory particularly useful. 
\section{Preliminaries}
 Throughout this article, we fix a (boundaryless) closed submanifold $\Sigma \subset \mathbb{R}^{n}$ of \emph{codimension} $d$. We point out that we do not assume that $\Sigma$ is orientable (unless otherwise explicitly stated like in Subsections \ref{T deltas subsection} and \ref{T derivatives subsection}).   We collect here some concepts that will be employed in the next sections. Our notation for manifolds is as in \cite{GPBook}, while we follow \cite{EKBook} for distribution theory and asymptotic expansions. In particular, we denote by $\langle f, \varphi \rangle$ the dual pairing between a distribution $f$ and a test function $\varphi$. 
 
We write $\mathrm{d} \sigma= \mathrm{d} \sigma_{\Sigma}$ for the volume density\footnote{As explained in  \cite[Section 10.3]{Folland84}, the measure $\mathrm{d}\sigma$ can always be constructed as a  volume \emph{density} for (not necessarily orientable) Riemannian manifolds. This is in contrast to the notion of a global volume form, which would require to impose orientability. } of $\Sigma$ (or any other submanifold depending on the context). 
 By $K\Subset \Omega$, we mean that $K$ is a compact subset of the interior of the set $\Omega$. It will be notationally convenient for us to employ two different symbols to distinguish, in some situation, distributional derivatives of functions from classical derivatives. We shall follow Farassat's convention from \cite{Fa96}. So, as usual, $\partial/\partial x_{i}$ stands for the classical partial derivative of functions, while $\bar{\partial}/\partial x_i$ stands for the distributional one, which is of course defined via the formula
 $$
 \Big\langle \frac{\bar{\partial} f}{\partial x_i}, \varphi \Big\rangle= -  \Big\langle f, \frac{\partial \varphi}{\partial x_i} \Big\rangle
, 
 $$
 where $\varphi$ is a test function.
 
 \subsection{The tubular neighborhood theorem}\label{subsection tubular neighborhood}
We shall make extensive use of the following result, known as the tubular neighborhood theorem. Given a positive continuous function $\epsilon:\Sigma\to(0,\infty)$, we shall call the open set
\[
\Sigma^{\epsilon}=\{x\in \mathbb{R}^{n}:\: \|x-\xi\| <\epsilon(\xi) \mbox{ for some }\xi\in\Sigma \}
\]
an $\epsilon$-neighborhood of $\Sigma$.

\begin{lemma}
\label{Tubular lemma} There is a continuous function
$\epsilon:\Sigma\to (0,\infty)$ such that every point $x\in \Sigma^{\epsilon}$ possesses a unique closest point\footnote{That is, a unique point $\pi(x)\in \Sigma$ such that $\operatorname*{dist}(x,\Sigma)=\|x-\pi(x)\|$.} $\pi(x)$ to the submanifold $\Sigma$ and the retraction $\pi: \Sigma^{\epsilon} \to \Sigma$ is a smooth submersion.
\end{lemma}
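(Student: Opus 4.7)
The plan is to realize $\Sigma^{\epsilon}$ as the diffeomorphic image of an open tube in the normal bundle of $\Sigma$ in $\mathbb{R}^{n}$, and then read off both the nearest-point property and the smoothness/submersion properties of $\pi$ from the inverse of that diffeomorphism. Concretely, let $N\Sigma=\{(\xi,v):\xi\in\Sigma,\ v\in T_{\xi}\Sigma^{\perp}\}$ be the normal bundle (a smooth rank-$d$ vector bundle over $\Sigma$) and consider the smooth endpoint map
\[
E:N\Sigma\longrightarrow\mathbb{R}^{n},\qquad E(\xi,v)=\xi+v.
\]
The retraction will then be defined as $\pi=\mathrm{pr}_{1}\circ E^{-1}$, where $\mathrm{pr}_{1}:N\Sigma\to\Sigma$ is the canonical bundle projection.

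The local step is an immediate application of the inverse function theorem. Under the identification $\mathbb{R}^{n}\cong T_{\xi}\Sigma\oplus T_{\xi}\Sigma^{\perp}$, the differential $dE_{(\xi,0)}$ is the identity, so for each $\xi\in\Sigma$ one obtains an open neighborhood $U_{\xi}\subset\Sigma$ and a radius $r(\xi)>0$ such that $E$ restricts to a diffeomorphism of $\{(\eta,v):\eta\in U_{\xi},\ \|v\|<r(\xi)\}$ onto an open subset of $\mathbb{R}^{n}$.

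The hard part is promoting this purely local statement to a continuous global radius $\epsilon$ for which $E$ restricts to a diffeomorphism of the tube $N^{\epsilon}\Sigma:=\{(\xi,v):\|v\|<\epsilon(\xi)\}$ onto $\Sigma^{\epsilon}$. Using the paracompactness of $\Sigma$, I would take a locally finite refinement of $\{U_{\xi}\}$ together with a subordinate partition of unity to manufacture a continuous $\epsilon_{1}:\Sigma\to(0,\infty)$ such that $E$ is a local diffeomorphism on $N^{\epsilon_{1}}\Sigma$. Injectivity on a potentially smaller tube is enforced by a compactness/contradiction argument that crucially exploits that $\Sigma$ is closed in $\mathbb{R}^{n}$: if one had $E(\xi_{k},v_{k})=E(\eta_{k},w_{k})$ with $\xi_{k}\neq\eta_{k}$ and $\|v_{k}\|,\|w_{k}\|\to 0$, then $\|\xi_{k}-\eta_{k}\|\to 0$, so both points would eventually lie in a common $U_{\xi}$, contradicting the local injectivity already obtained. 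Shrinking $\epsilon_{1}$ continuously in this way yields the desired $\epsilon$ with $E:N^{\epsilon}\Sigma\to\Sigma^{\epsilon}$ a diffeomorphism.

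Finally, for $x=\xi+v\in\Sigma^{\epsilon}$ with $(\xi,v)\in N^{\epsilon}\Sigma$, the function $\eta\mapsto\|x-\eta\|^{2}$ on $\Sigma$ has $\xi$ as a critical point because $v\perp T_{\xi}\Sigma$. To promote this to a unique global minimum, I would shrink $\epsilon$ once more (using continuity of $\mathrm{dist}(\cdot,\Sigma)$ and the injectivity just established) so that any $\eta\in\Sigma$ with $\|x-\eta\|\leq\|x-\xi\|$ necessarily satisfies $(\eta,x-\eta)\in N^{\epsilon}\Sigma$; the injectivity of $E$ on $N^{\epsilon}\Sigma$ then forces $\eta=\xi$. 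Hence $\pi(x)=\xi$ is the unique nearest point. Smoothness of $\pi=\mathrm{pr}_{1}\circ E^{-1}$ is clear since both factors are smooth, and $\pi$ is a submersion because $\mathrm{pr}_{1}:N\Sigma\to\Sigma$ is a submersion (being a vector bundle projection) and $E^{-1}$ is a diffeomorphism.
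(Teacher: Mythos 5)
The paper does not give its own proof of this lemma; it simply refers to Guillemin--Pollack (\cite[p.~69]{GPBook}), whose argument is exactly the normal-bundle/endpoint-map route you have taken. So your strategy coincides with what the paper points to, and the local step, the injectivity-by-compactness step, and the reading off of smoothness and the submersion property from $\pi = \mathrm{pr}_1\circ E^{-1}$ are all on the right track (with the usual caveat that, since $\Sigma$ need not be compact, the contradiction argument must be run on compact pieces of $\Sigma$ and assembled via a continuous $\epsilon$, which you indicate but do not carry out).

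There is, though, a genuine gap in your nearest-point step. You claim that, after shrinking $\epsilon$, ``any $\eta\in\Sigma$ with $\|x-\eta\|\leq\|x-\xi\|$ necessarily satisfies $(\eta,x-\eta)\in N^{\epsilon}\Sigma$.'' But membership in $N^{\epsilon}\Sigma$ requires the orthogonality $x-\eta\perp T_{\eta}\Sigma$, and no amount of shrinking $\epsilon$ can force this for an arbitrary $\eta$ that happens to be close to $x$; orthogonality is a property of $\eta$, not of the radius. The argument must instead be restricted to a \emph{minimizer} of $\eta\mapsto\|x-\eta\|^{2}$ on $\Sigma$: such a minimizer $\eta^{*}$ exists because $\Sigma$ is closed in $\mathbb{R}^{n}$ (so the sublevel sets of the distance function are compact), and at $\eta^{*}$ the first-order condition gives precisely $x-\eta^{*}\perp T_{\eta^{*}}\Sigma$. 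Then $\|x-\eta^{*}\|\leq\|x-\xi\|<\epsilon(\xi)$, and one shrinks $\epsilon$ once more (using its continuity) so that this bound also gives $\|x-\eta^{*}\|<\epsilon(\eta^{*})$, whence $(\eta^{*},x-\eta^{*})\in N^{\epsilon}\Sigma$ and injectivity of $E$ forces $\eta^{*}=\xi$. Without routing the argument through a critical point of the distance function, the appeal to injectivity of $E$ has nothing to bite on.
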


A proof of Lemma \ref{Tubular lemma} can be found e.g. in \cite[p.~69]{GPBook}. We recall that $\pi$ being a submersion means that $d\pi_{x}:\mathbb{R}^{n}\to T_{\pi(x)}(\Sigma)$ is surjective for all $x\in \Sigma^{\epsilon}$, where $T_{\pi(x)}(\Sigma)$ is the tangent space of $\Sigma$ at $\pi(x)$. We can actually give a more precise description of $d\pi_{\xi}$ for $\xi\in \Sigma$. We shall denote as $ N_{\xi}(\Sigma)$ the \emph{normal space} at $\xi\in\Sigma$, so that $\mathbb{R}^{n}= T_{\xi}(\Sigma)\oplus N_{\xi}(\Sigma)$. One may verify that $x-\pi(x)\in N_{x}(\Sigma)$ for each $x\in \Sigma^{\epsilon}$.

\begin{lemma} 
\label{Tlemmaprojection}  We have $N_{\pi(x)}(\Sigma) \subseteq \operatorname*{ker} [(d\pi_{x})^{\top}]$ for all $x\in\Sigma^{\epsilon}$. Furthermore, $d\pi_\xi$ is the orthogonal projection onto $T_{\xi}(\Sigma)$ for each $\xi\in\Sigma$. 

\end{lemma}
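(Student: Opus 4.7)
The plan is to treat the two assertions separately. The first will follow by a purely linear-algebraic observation from Lemma \ref{Tubular lemma}, while the second reduces to computing $d\pi_\xi$ on the complementary summands in the orthogonal decomposition $\mathbb{R}^n = T_\xi(\Sigma)\oplus N_\xi(\Sigma)$.

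For the inclusion $N_{\pi(x)}(\Sigma)\subseteq \ker[(d\pi_x)^\top]$, I view $d\pi_x$ as a linear endomorphism of $\mathbb{R}^n$ by post-composing with the inclusion $T_{\pi(x)}(\Sigma)\hookrightarrow\mathbb{R}^n$. The general identity $\ker(A^\top)=(\operatorname{Im} A)^\perp$ together with the fact that $d\pi_x$ is valued in $T_{\pi(x)}(\Sigma)$ give
\[
\ker\bigl[(d\pi_x)^\top\bigr]=(\operatorname{Im} d\pi_x)^\perp\supseteq T_{\pi(x)}(\Sigma)^\perp = N_{\pi(x)}(\Sigma),
\]
and no further input is required.

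For the ``furthermore'' part I fix $\xi\in\Sigma$, split an arbitrary $u\in\mathbb{R}^n$ as $u=v+w$ with $v\in T_\xi(\Sigma)$ and $w\in N_\xi(\Sigma)$, and compute $d\pi_\xi$ on each piece. Choosing a smooth curve $\gamma\colon (-\delta,\delta)\to \Sigma$ with $\gamma(0)=\xi$ and $\dot\gamma(0)=v$, the identity $\pi|_\Sigma=\mathrm{id}_\Sigma$ gives $d\pi_\xi(v)=(\pi\circ\gamma)'(0)=v$. To evaluate $d\pi_\xi(w)$, I use the affine curve $\eta(t)=\xi+tw$, which lies in $\Sigma^{\epsilon}$ for $|t|$ small; since $\eta(t)-\xi=tw\in N_\xi(\Sigma)$, the uniqueness of the decomposition $y=\pi(y)+(y-\pi(y))$ with $y-\pi(y)\in N_{\pi(y)}(\Sigma)$ forces $\pi(\eta(t))=\xi$, and differentiating at $t=0$ gives $d\pi_\xi(w)=0$. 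By linearity $d\pi_\xi(u)=v$, so $d\pi_\xi$ coincides with the orthogonal projection onto $T_\xi(\Sigma)$.

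The only delicate step is the assertion $\pi(\xi+tw)=\xi$ for $w\in N_\xi(\Sigma)$ and $t$ small, since the minimizer characterization alone does not obviously rule out other critical points of the distance function on $\Sigma$. The cleanest justification is the bundle form of the tubular neighborhood theorem: the map $(\zeta,n)\mapsto \zeta+n$ is a diffeomorphism of a neighborhood of the zero section of the normal bundle $N\Sigma$ onto $\Sigma^{\epsilon}$ (possibly after shrinking $\epsilon$), under which $\pi$ becomes the bundle projection. Invoking this, which is essentially the content of the statement $x-\pi(x)\in N_{\pi(x)}(\Sigma)$ recorded before the lemma, the claim is immediate. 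With this point granted, the rest of the proof is routine.
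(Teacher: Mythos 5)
Your proof is correct and takes essentially the same approach as the paper: the first assertion is the paper's direct computation repackaged via the identity $\ker(A^\top)=(\operatorname{Im}A)^\perp$, and the second uses the same decomposition of $\mathbb{R}^n$ into tangent and normal pieces, computing $d\pi_\xi$ on each. If anything you are more careful than the paper, which simply asserts that $\pi(\xi+tw)$ is constant in $t$ without flagging that this requires the bundle form of the tubular neighborhood theorem, whereas you explicitly identify this as the delicate step and cite the needed diffeomorphism.
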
 
\begin{proof}
Let $v\in N_{\pi(x)}(\Sigma)$ and let $w\in\mathbb{R}^{n}$ be arbitrary. Since $d\pi_{x}(w)\in T_{\pi(x)}(\Sigma)$, we obtain $d\pi_{x}^{\top}(v) \cdot w= v \cdot d\pi_{x}(w)=0$, and we deduce that $d\pi_{x}^{\top}(v)=0$. For the second claim, 
 observe that if $v$ belongs to a bounded subset of $N_{\xi}(\Sigma)$ and $t$ is small enough, then $\pi(\xi+tv)$ is a constant function of $t$, so that
\begin{equation*}
d\pi_\xi(v)= \lim_{t\to 0} \frac{\pi(\xi+tv)-\pi(\xi)}{t}=0.
\end{equation*}
Therefore, $N_{\xi}(\Sigma) \subseteq \operatorname*{ker} d\pi_{\xi}.$ On the other hand, since $\pi$ is the identity on $\Sigma$,  the linear map $(d\pi_\xi)_{|T_{\xi}(\Sigma)}$ is the identity as well, whence the result follows.
\end{proof}
 
 The relevance of $\Sigma^{\epsilon}$ for us is that it is diffeomorphic to an open neighborhood of $\Sigma$ inside its normal bundle (see \cite{GPBook}).

\subsection{$\delta$-derivatives} Let $f$ be a smooth function only defined on $\Sigma$. An expression such as $\partial f/\partial x_i$ makes  no sense in general. Nevertheless, sometimes it  is convenient, and even necessary, to work with substitutes of the partial derivatives for $f$ with respect to the variables of the surrounding space $\mathbb{R}^{n}$. 

We shall consider here a slight extension of the so-called $\delta$-derivatives, usually defined for hypersurfaces  \cite{EK85,EKBook}.  Let $F$ be a smooth extension of $f$ to an open neighborhood of $\Sigma$ in $\mathbb{R}^{n}$. We define the $\delta$-derivative of $f$ with respect to the variable  $x_i$ at the point $\xi\in\Sigma$ as the $i$-th coordinate of the projection of the gradient $\left.\nabla F\right|_ {\xi}$ onto the tangent space $T_{\xi}(\Sigma)$. If $\{\mathbf{n}_{1}, \mathbf{n}_{2}, \dots, \mathbf{n}_{d}\}$ is a local smooth frame on an open subset $W\subset \Sigma$ that is an orthonormal basis of $N_{\xi}(\Sigma)$ at each $\xi\in W$, we have
\[
\frac{\delta f}{\delta x_i}= \left.\frac{\partial F}{\partial x_i}\right|_{W} - \sum_{k=1}^{d} n_{k,i} \frac{d F}{d \mathbf{n}_k} 
\]
where ${d F}/{d \mathbf{n}_k}= \mathbf{n}_k \cdot \nabla F\left.\right|_{\Sigma}$ is the normal derivative in the direction of $\mathbf{n}_k$ 
and $n_{k,i}$ is the $i$-th coordinate of $\mathbf{n}_k$. 
One might verify that $\delta f/\delta x_j$ is independent of the choice of the extension $F$.

A particularly useful extension of $f$  is provided by the function $f\circ \pi$ defined on the tubular neighborhood $\Sigma^{\epsilon}$. If $v \in N_{\xi} (\Sigma)$, we have
$$
\left.\nabla (f\circ \pi)\right|_{\xi} \cdot v= (d f_{\xi} \circ d\pi_{\xi})(v)=0,
$$
because of Lemma \ref{Tlemmaprojection}. This means that $\left.\nabla (f\circ \pi)\right|_{\xi}\in T_{\xi}(\Sigma)$ at each $\xi\in\Sigma$, which yields the neat expression
\begin{equation}
\label{Teq2.1} \frac{\delta f}{\delta x_i}= \left.\frac{\partial( f\circ \pi)}{\partial x_i}\right|_{\Sigma}.
\end{equation}

\subsection{Strong asymptotic expansions}
We shall need the concept of \emph{strong} asymptotic expansions. It is well known that asymptotic expansions cannot be differentiated in general, but if they may, we say that they hold strongly. More precisely, if $b$ and $c_j$ are smooth functions, we say that $b$  admits the strong asymptotic expansion
$$
b(x)\sim \sum_{j} c_{j}(x)
$$
if for each multi-index $\alpha$
$$
\frac{\partial^{\alpha}b}{\partial x^{\alpha}}\sim \sum_{j} \frac{\partial^{\alpha}c_{j}}{\partial x^{\alpha}}\: .
$$
\section{The space of test functions}\label{space test functions}
We now introduce our new spaces of test functions. We shall fix 
an $\epsilon$-neighborhood  $\Sigma^{\epsilon}$ such that $\pi: \Sigma^{\epsilon}\to \Sigma$ is a smooth retractive submersion as stated in Lemma \ref{Tubular lemma}.  Our construction makes use of the following lemma. 

\begin{lemma}
\label{lemma1} There is a family $\{U_{\nu}\}_{\nu\in \mathbb{N}}$ of bounded open subsets of $\Sigma^{\epsilon}$ such that $\{\Sigma\cap U_{\nu}\}_{\nu\in \mathbb{N}}$ is an open covering of $\Sigma$ and  each $U_{\nu}$ has the following properties: 
\begin{itemize}
\item [(i)] $U_{\nu}\cap \Sigma \neq \emptyset$;
\item [(ii)] there are smooth vector fields $\mathbf{n}^{\nu}_{j}:U_{\nu}\cap \Sigma\to \mathbb{S}^{n-1}$, $j=1,\dots, d,$ such that the frame $\{\mathbf{n}^{\nu}_{1}(\xi), \mathbf{n}_{2}^{\nu}(\xi), \dots, \mathbf{n}^{\nu}_{d}(\xi)\}$ forms an orthonormal basis of the normal space $N_{\xi}(\Sigma)$ at each $\xi\in \Sigma$;
\item [(iii)] there is $\rho_\nu>0$ such that the map $(\xi, \omega, \rho)\to \xi+  \rho \sum_{j=1}^{d}\omega_j \mathbf{n}_{j}^{\nu}(\xi)$, where $\omega=(\omega_1,\dots, \omega_{d})$, is a diffeomorphism from $(\Sigma\cap U_{\nu})\times \mathbb{S}^{d-1} \times (0,\rho_{\nu})$ onto $U_{\nu}\setminus{\Sigma}$.
\end{itemize}

\end{lemma}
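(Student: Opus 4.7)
The plan is to build each $U_\nu$ as the image under the tubular neighborhood diffeomorphism of a small ``tube'' consisting of a relatively compact patch of $\Sigma$ together with a short normal disk, and then pass from Cartesian to polar coordinates in the fibers. Conditions (i) and (ii) are then immediate from the construction, while (iii) reduces to observing that the only nontrivial change of variables involved is the standard polar parametrization of the punctured disk in $\mathbb{R}^d$.

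For the local orthonormal frames required in (ii), I would use that $N\Sigma\to\Sigma$ is a smooth rank-$d$ vector subbundle of the trivial bundle $\Sigma\times\mathbb{R}^n$, hence is locally trivializable. Around each $p\in\Sigma$, choose an open neighborhood $V_p\subset\Sigma$ with compact closure on which $N\Sigma$ admits a smooth frame and apply Gram--Schmidt relative to the ambient Euclidean inner product of $\mathbb{R}^n$; this yields smooth maps $\mathbf{n}_1^{p},\dots,\mathbf{n}_d^{p}\colon V_p\to\mathbb{S}^{n-1}$ whose values at any $\xi\in V_p$ form an orthonormal basis of $N_{\xi}(\Sigma)$. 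No global orientability of $\Sigma$ enters, since the construction is purely local.

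For (iii), invoke Lemma \ref{Tubular lemma} to view the map $E\colon(\xi,v)\mapsto\xi+v$ as a diffeomorphism from an open neighborhood $\mathcal{N}$ of the zero section of $N\Sigma$ onto $\Sigma^{\epsilon}$. Since $\overline{V_p}$ is compact, there is $\rho_p>0$ such that $\{(\xi,v):\xi\in V_p,\ |v|<\rho_p\}\subset\mathcal{N}$. Set
\[
U_p:=E\bigl(\{(\xi,v)\in N\Sigma:\xi\in V_p,\ |v|<\rho_p\}\bigr),
\]
which is bounded and open in $\Sigma^{\epsilon}$ and satisfies $U_p\cap\Sigma=V_p\neq\emptyset$. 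Every $v\in N_\xi(\Sigma)$ with $0<|v|<\rho_p$ can be uniquely written $v=\rho\sum_{j=1}^d\omega_j\mathbf{n}_j^{p}(\xi)$ with $\rho=\|v\|\in(0,\rho_p)$ and $\omega\in\mathbb{S}^{d-1}$; this is the usual polar diffeomorphism between $\mathbb{S}^{d-1}\times(0,\rho_p)$ and the punctured open ball in $\mathbb{R}^d$, composed on each fiber with the linear isometry determined by the frame. Composing with $E|_{\mathrm{tube}}$ gives precisely the map in (iii), which is therefore a diffeomorphism onto $U_p\setminus\Sigma$.

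Finally, $\{V_p\}_{p\in\Sigma}$ is an open cover of $\Sigma$, which is second countable as a subset of $\mathbb{R}^n$, so we extract a countable subcover $\{V_{p_\nu}\}_{\nu\in\mathbb{N}}$ and set $U_\nu:=U_{p_\nu}$. The only point requiring care is to choose each $V_p$ small enough that the Gram--Schmidt procedure goes through and that the normal cylinder of radius $\rho_p$ over $V_p$ lies inside the domain of validity of the tubular neighborhood diffeomorphism; both are handled by relative compactness of $\overline{V_p}$ and a standard compactness/continuity argument, and I expect this to be the main (but entirely routine) technical step.
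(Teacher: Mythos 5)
Your proposal is correct and takes essentially the same route as the paper: both build local orthonormal normal frames on small patches of $\Sigma$ (the paper via "local coordinates", you via local triviality of $N\Sigma$ plus Gram--Schmidt), define $U_\nu$ as the corresponding normal tube inside $\Sigma^\epsilon$, identify the map in (iii) as the tubular neighborhood diffeomorphism composed with fiberwise polar coordinates, and extract a countable subcover by second countability. The only cosmetic difference is that the paper verifies the diffeomorphism directly by writing down the smooth inverse $\Phi_\nu^{-1}(x)=(\pi(x),\omega_x,\|x-\pi(x)\|)$, while you invoke the abstract statement that $E\colon(\xi,v)\mapsto\xi+v$ is a diffeomorphism from a neighborhood of the zero section of $N\Sigma$ onto $\Sigma^\epsilon$.
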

\begin{proof}
The proof is standard, but we include it for the sake of completeness. Using local coordinates, one finds an open covering $\{W_{\nu}\}_{\nu\in\mathbb{N}}$ of $\Sigma$ with each $\emptyset\neq W_{\nu}\subset\Sigma$ being bounded and corresponding vector fields $\mathbf{n}^{\nu}_{j}:W_{\nu}\to \mathbb{S}^{n-1}$, $j=1,\dots, d,$ such that $\{\mathbf{n}^{\nu}_{1}(\xi), \mathbf{n}_{2}^{\nu}(\xi), \dots, \mathbf{n}^{\nu}_{d}(\xi)\}$ is an orthonormal basis of $N_{\xi}(\Sigma)$ at each $\xi\in \Sigma$. We pick $0<\rho_{\nu}< \inf_{\xi\in W_{\nu}} \epsilon(\xi)$ and set $U_{\nu}=\{x\in \Sigma_{\epsilon}: \: \pi(x)\in W_{\nu} \mbox{ and } \|x-\pi(x)\|<\rho_{\nu} \}$, so that (i) and (ii) trivially hold. If we choose $\rho_{\nu}$ sufficiently small, we can guarantee that the map $\Phi_{\nu}(\xi,\omega,\rho)=  \xi+  \rho \sum_{j=1}^{d}\omega_j \mathbf{n}_{j}^{\nu}(\xi)$ is a smooth bijection $\Phi_{\nu}:W_{\nu}\times \mathbb{S}^{d-1} \times (0,\rho_{\nu})\to U_{\nu}\setminus \Sigma$ .  Its inverse is the smooth function
\begin{equation}
\label{eqTest1}
\Phi^{-1}_{\nu}(x)=(\pi(x), \omega_{x}, \|x-\pi(x)\|),
\end{equation}
where the  $k$-th coordinate of the unit vector $\omega_{x}\in\mathbb{S}^{d-1}$ is
given by
\begin{equation}
\label{teq2}
\omega_{x,k}=\frac{\mathbf{n}_{k}^{\nu}(\pi(x)) \cdot (x-\pi(x))}{\|x-\pi(x)\|}.
\end{equation}

\end{proof}

\begin{figure}[ptbh]
\centerline{\includegraphics[scale=0.8]
{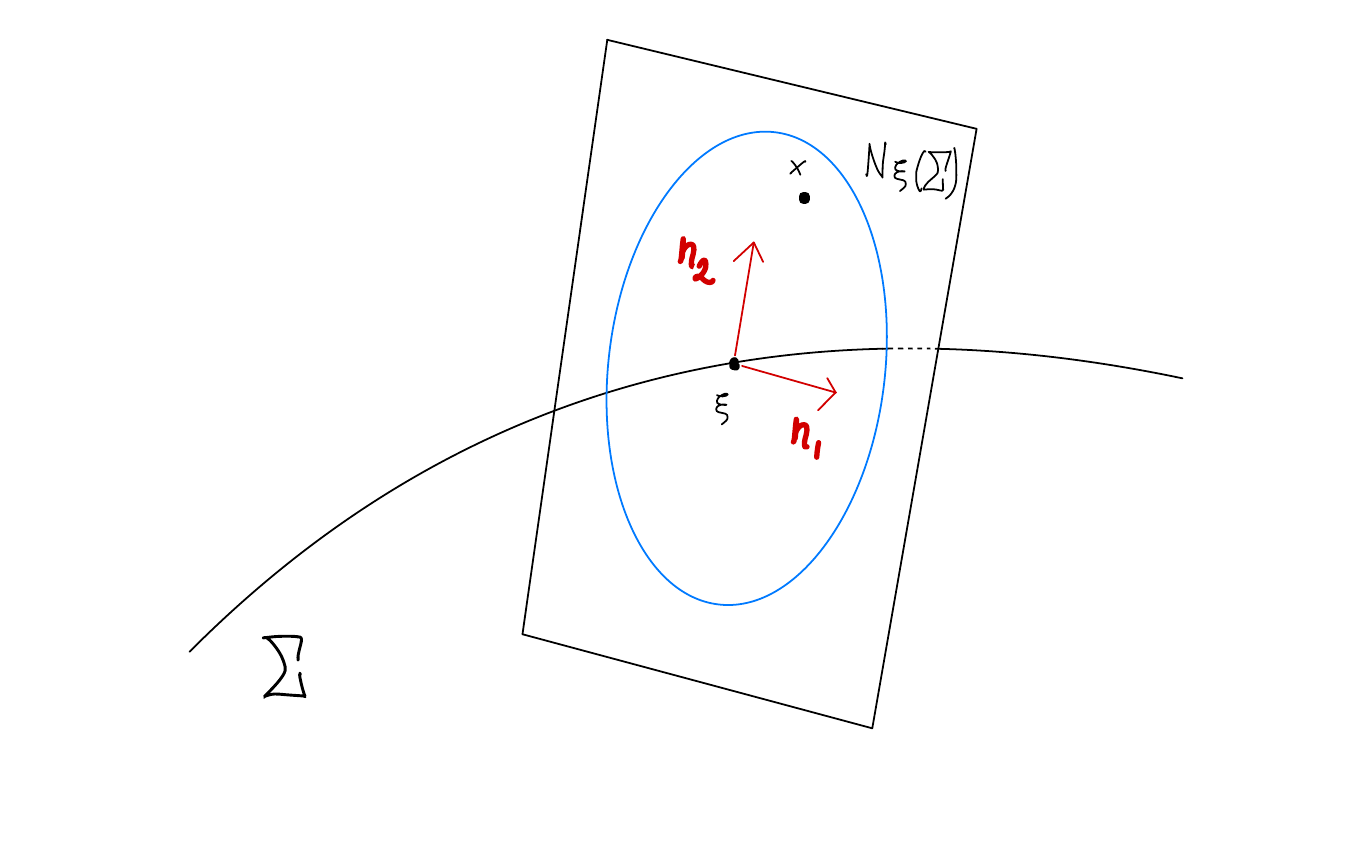}}
\caption{ Local tube coordinates $x=\xi+ \rho \omega_1 \mathbf{n}_1 +\rho \omega_2 \mathbf{n}_2$ for a curve $\Sigma$ in $\mathbb{R}^3$}
\label{fig}\end{figure}

From now on, we shall also fix a family  $\{U_{\nu}\}_{\nu\in \mathbb{N}}$ having the properties stated in Lemma \ref{lemma1}.
To ease our writing, we denote as $(\xi_{x},\omega_{x}, \rho_{x})$ the image of $x\in U_{\nu}$ under the inverse of the diffeomorphism from Lemma \ref{lemma1}(iii). Naturally, $(\xi_{x},\omega_{x}, \rho_{x})$ has the explicit form \eqref{eqTest1}; see Figure \ref{fig} for a graphic representation of the local tube coordinates.

\begin{definition}\label{def3.2}
Let $\Omega$ be an open subset of $\mathbb{R}^{n}$ such that $\Omega\cap \Sigma\neq 0$. 
\begin{itemize}
\item [(i)]
We define $\mathcal{D}_{\ast, \Sigma}(\Omega)$ as the function space consisting of all $\phi\in C^{\infty}(\Omega\setminus{\Sigma})$ such that on each $\Omega\cap U_{\nu}\setminus\Sigma\neq \emptyset$ the function $\phi$ admits (uniform) \emph{strong} asymptotic expansion
\begin{equation}
\label{teq1}
\phi(x)\sim \sum_{j=m}^{\infty} a_{j}(\xi_{x}, \omega_{x}) \: \rho_{x}^{j} \qquad \mbox{as }\rho_{x}\to0,
\end{equation}
for some $m\in\mathbb{Z}$ and some smooth functions $a_{j}=a_{j}^{\phi,\nu} \in C^{ \infty}((\Sigma \cap U_{\nu})\times \mathbb{S}^{d-1})$.
\item [(ii)]  We write $\mathcal{D}^{[m;K]}_{\ast, \Sigma}(\Omega)$  for the subspace of $\mathcal{D}_{\ast, \Sigma}(\Omega)$ consisting of functions $\phi$ that vanish outside a given compact set $K\Subset \Omega$ and have strong asymptotic expansion \eqref{teq1} starting at a given $m\in\mathbb{Z}$.
\item [(iii)] $\mathcal{E}_{\ast, \Sigma}(\Omega)$ stands for the space of all $\psi\in C^{\infty}(\Omega\setminus{\Sigma})$ such that $\varphi \cdot \psi \in \mathcal{D}_{\ast, \Sigma}(\Omega)$ for every $\varphi \in \mathcal{D}(\Omega)$.
\end{itemize}
\end{definition}

\smallskip

The spaces $\mathcal{D}_{\ast, \Sigma}(\Omega)$ and $\mathcal{E}_{\ast, \Sigma}(\Omega)$ are closed under differentiation:

\begin{proposition}
\label{tp1} For each $\alpha\in\mathbb{N}^{n}$, we have $(\partial/\partial x)^{\alpha}: \mathcal{D}^{[m;K]}_{\ast, \Sigma}(\Omega)\to \mathcal{D}^{[m-|\alpha|;K]}_{\ast, \Sigma}(\Omega) $.
\end{proposition}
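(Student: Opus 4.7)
The plan is to argue by induction on $|\alpha|$, reducing at once to the case $\alpha=e_{i}$. The conditions ``$\phi\in C^{\infty}(\Omega\setminus\Sigma)$'' and ``$\phi$ vanishes outside $K$'' are preserved trivially by any classical partial derivative, so the only substantive issue is to produce the strong asymptotic expansion on each piece $\Omega\cap U_{\nu}$, with leading exponent lowered by one.

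To handle that, I would work in the tubular coordinates $(\xi_{x},\omega_{x},\rho_{x})$ supplied by Lemma~\ref{lemma1}, set $\tilde\phi(\xi,\omega,\rho):=\phi(\Phi_{\nu}(\xi,\omega,\rho))$, and expand $\partial\phi/\partial x_{i}$ by the chain rule as a linear combination of $\partial\tilde\phi/\partial\xi_{k}$, $\partial\tilde\phi/\partial\omega_{l}$ and $\partial\tilde\phi/\partial\rho$ with coefficients $\partial\pi_{k}/\partial x_{i}$, $\partial\omega_{x,l}/\partial x_{i}$ and $\partial\rho_{x}/\partial x_{i}$. The heart of the argument is to expand these three coefficients in powers of $\rho_{x}$ with coefficients smooth in $(\xi_{x},\omega_{x})$. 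Using \eqref{eqTest1}, \eqref{teq2} and the orthogonality $x-\pi(x)\in N_{\pi(x)}(\Sigma)\subseteq\ker(d\pi_{x})^{\top}$ from Lemma~\ref{Tlemmaprojection}, a short computation yields $\partial\rho_{x}/\partial x_{i}=\sum_{l}\omega_{x,l}\,n^{\nu}_{l,i}(\xi_{x})$, independent of $\rho_{x}$. Since $\pi$ extends smoothly across $\Sigma$, the pullback $(\partial\pi_{k}/\partial x_{i})\circ\Phi_{\nu}$ is smooth on $(\Sigma\cap U_{\nu})\times\mathbb{S}^{d-1}\times[0,\rho_{\nu})$, giving a Taylor series in $\rho$ starting at order $0$. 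For $\partial\omega_{x,l}/\partial x_{i}$, direct calculation produces a Laurent-type expansion starting at $\rho^{-1}$: the apparent $\rho^{-1}$ piece arising from $e_{i}-\partial\pi/\partial x_{i}$ is absorbed by Hadamard's lemma, because $(d\pi_{\xi})^{\top}\mathbf{n}^{\nu}_{l}=0$ on $\Sigma$, while the genuinely singular contribution $-\omega_{x,l}(\partial\rho_{x}/\partial x_{i})/\rho_{x}$ already has coefficient smooth in $(\xi_{x},\omega_{x})$.

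Applying $\partial/\partial x_{i}$ termwise to $\phi\sim\sum_{j\ge m}a_{j}(\xi_{x},\omega_{x})\rho_{x}^{j}$ and regrouping by powers of $\rho_{x}$ then produces a new series $\sum_{k\ge m-1}b_{k}(\xi_{x},\omega_{x})\rho_{x}^{k}$ whose coefficients are polynomials in the $a_{j}$ and their $\xi$- and $\omega$-derivatives, together with the expansion coefficients of the chain-rule factors, hence smooth; the regrouping is legitimate because, for each fixed $k$, only the finitely many indices $m\le j\le k+1$ contribute. The asymptotic relation itself is exactly the content of the strongness of the original expansion, and strongness of the new expansion is inherited at once via $\partial^{\beta}(\partial\phi/\partial x_{i})=\partial^{\beta+e_{i}}\phi$, which by the same hypothesis is asymptotic to the termwise-differentiated original series and hence to the $\partial^{\beta}$-derivative of the regrouped one. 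Iterating in $i$ completes the induction.

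I expect the main obstacle to be the careful bookkeeping needed to isolate the $\rho^{-1}$ singularity in $\partial\omega_{x,l}/\partial x_{i}$ and verify that, once paired with $\partial\tilde\phi/\partial\omega_{l}=O(\rho_{x}^{m})$, it contributes cleanly at order $\rho_{x}^{m-1}$ with smooth angular coefficient, rather than generating angular irregularities that would violate the shape required by Definition~\ref{def3.2}.
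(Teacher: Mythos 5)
Your proposal is correct and follows essentially the same route as the paper's: differentiate the expansion termwise (justified by strongness), pass to tubular coordinates via the chain rule, observe that $\partial\rho_x/\partial x_i$ depends only on $(\xi_x,\omega_x)$, Taylor-expand $\partial\pi_k/\partial x_i$ in $\rho_x$, and regroup the double series. The one small inefficiency is the appeal to Hadamard's lemma for $\mathbf{n}_l^\nu(\xi_x)\cdot\partial(x-\pi(x))/\partial x_i$: Lemma~\ref{Tlemmaprojection} already gives $(d\pi_x)^\top\mathbf{n}_l^\nu(\pi(x))=0$ for \emph{all} $x\in\Sigma^{\epsilon}$, not merely on $\Sigma$, so this quantity equals $n_{l,i}^\nu(\xi_x)$ exactly and there is no $O(\rho_x)$ remainder to absorb.
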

\begin{proof}
Since our assumption is that \eqref{teq1} can be differentiated termwise, it suffices to study terms of the form 
\begin{align*}
\frac{\partial}{\partial x_{i} } (\rho_{x}^{j} a(\xi_{x},\omega_{x}))=j\rho_{x}^{j-1} a(\xi_{x},\omega_{x}) \frac{\partial \rho_{x}}{\partial x_{i} }+ \rho_{x}^{j}\frac{\partial}{\partial x_{i} } (a(\xi_{x},\omega_{x})), \qquad i=1, \dots, n,
\end{align*}
where $a\in C^{\infty}((\Sigma\cap U_{\nu}) \times \mathbb{S}^{d-1} )$. 

Let us first show that $\nabla \rho_{x} $, and consequently each $\partial \rho_{x}/\partial x_{i}$, only depends on $(\xi_{x},\omega_{x})$ and not on $\rho_{x}$. We verify this with the aid of Lemma \ref{Tlemmaprojection} and the chain rule. In fact, using
$x-\pi(x) \in N_{\pi(x)}(\Sigma)$,
		
			\begin{align*}
				(\nabla \rho |_x)^{\top}&=\left[\nabla (\|\cdot\|)|_{x-\pi(x)} \cdot d(x-\pi(x))_{x}\right]^{\top}\\
				&=\frac{x-\pi(x)}{\|x-\pi(x)\|}- d\pi_x^{\top}\left(\frac{x-\pi(x)}{\|x-\pi(x)\|}\right)\\
				&=\frac{x-\pi(x)}{\|x-\pi(x)\|}=\sum_{k=1}^{d}\omega_{x,k} \mathbf{n}_k^{\nu}(\xi_{x}),
			\end{align*}
where $\omega_{x,k}$ is given by \eqref{teq2}.

Next, we smoothly extend $a$ as $A(\xi,\omega)=a(\pi(\xi), \omega/\|\omega\|)$ for each $(\xi,\omega)\in U_{\nu} \times (\mathbb{R}^{d}\setminus\{0\}).$ We set $\pi(x)=\xi_{x}=(\xi_{1}(x), \dots, \xi_{n}(x)) $. We also write $\omega_{x}=(\omega_{x,1},\dots, \omega_{x,n})$. Hence (see \eqref{Teq2.1}), 
\begin{align*}
\frac{\partial}{\partial x_{i} } (a(\xi_{x},\omega_{x}))&=\sum_{l=1}^{n} \left.\frac{\partial A}{\partial \xi_{l} }\right|_{(\xi_{x},\omega_x)}\left.\frac{ \partial \xi_{l}}{\partial x_i}\right|_{ x} + \sum_{k=1}^{d}\left.\frac{\partial A}{\partial \omega_{k} }\right|_{(\xi_{x},\omega_{x})}\frac{ \partial}{\partial x_i}\left(\frac{\mathbf{n}_{k}^{\nu}(\xi_x) \cdot (x-\xi_{x})}{\rho_{x}}\right)\\
&
= \sum_{l=1}^{n}\left. \frac{\delta a}{\delta \xi_{l} }\right|_{(\xi_{x},\omega_x)}\left.\frac{\partial \xi_{l}}{\partial x_i}\right|_{ x }
+\sum_{k=1}^{d} \frac{1}{\rho_x} \left. \frac{\delta a}{\delta \omega_{k} }\right|_{(\xi_{x},\omega_{x})}\left(- {\omega_{x,k}}\left.\frac{\partial \rho_x}{\partial x_i}\right|_{(\xi_{x},\omega_{x})} \right.
\\
&
\qquad  \qquad
\left.+\mathbf{n}_{k}^{\nu}(\xi_x) \cdot\frac{ \partial}{\partial x_i} (x-\xi_{x})+\left.\frac{\delta \mathbf{n}^{\nu}_{k}}{\delta x_{i}}\right|_{\xi_x} \cdot  (x-\xi_{x})
\right).
\end{align*}
Let $\mathbf{n}^{\nu}_{k}=(n^{\nu}_{k,1}, \dots, n^{\nu}_{k,n})$. The term $\mathbf{n}_{k}^{\nu}(\xi_x) \cdot(\partial/\partial x_i) (x-\xi_{x})$ can be simplified. It is the $i$-th component of the vector
$$
(d(x-\xi(x))_x)^{\top} (\mathbf{n}^{\nu}_{k}) = \mathbf{n}^{\nu}_{k} - d\pi_x^{\top}(\mathbf{n}^{\nu}_{k})= \mathbf{n}^{\nu}_{k},
$$
so that $\mathbf{n}_{k}^{\nu}(\xi_x) \cdot(\partial/\partial x_i) (x-\xi_{x})= n^{\nu}_{k,i}(\xi_x)$. Each function $\partial \xi_{l}/\partial x_{i}$ is smooth on $\Sigma^{\epsilon}$, and we may then use Lemma \ref{lemma1}(iii) and Taylor's theorem to expand it as (see the proof of Theorem \ref{tth1} below)
\begin{equation}
\label{eqdef b coefficient}
\left.\frac{ \partial \xi_{l}}{\partial x_i}\right|_{ x}\sim \left.\frac{ \partial \xi_{l}}{\partial x_i}\right|_{ \xi_{x}} + \sum_{q=1}^{\infty} \rho_{x}^{q} \: b_{l,i,q}(\xi_{x},\omega_{x}) \qquad  \mbox{as }\rho_{x}\to0,
\end{equation}
where each $b_{l,i,q}\in C^{\infty}((\Sigma\cap U_{\nu}) \times \mathbb{S}^{d-1} ).$ Noticing that
\[
\sum_{l=1}^{n}\left. \frac{\delta a}{\delta \xi_{l} }\right|_{(\xi_{x},\omega_x)}\left.\frac{\partial \xi_{l}}{\partial x_i}\right|_{ \xi_{x} }= \left. \frac{\delta a}{\delta \xi_{i} }\right|_{(\xi_{x},\omega_x)}
\]
because $(\delta a/\delta\xi_1, \dots, \delta a/\delta\xi_n)$ is a tangent vector and $d\pi_{\xi_x}$ is the orthogonal projection onto the tangent space, 
we finally obtain
\begin{align*}
\frac{\partial}{\partial x_{i} } (a(\xi_{x},\omega_{x}))&\sim  \frac{1}{\rho_x} \sum_{k=1}^{d} \left.  \frac{\delta a}{\delta \omega_{k} }\right|_{(\xi_{x},\omega_{x})}\left(n_{k,i}^{\nu}(\xi_x)- {\omega_{x,k}}\left.\frac{\partial \rho_x}{\partial x_i}\right|_{(\xi_{x},\omega_{x})}\right)
\\
&
\qquad  \qquad
+ \left. \frac{\delta a}{\delta \xi_{i} }\right|_{(\xi_{x},\omega_x)}
+\sum_{k=1}^{d}\sum_{h=1}^{d}\omega_{x,h} \left.  \frac{\delta a}{\delta \omega_{k} }\right|_{(\xi_{x},\omega_{x})}\mathbf{n}^{\nu}_{h}(\xi_x)\cdot \left.\frac{\delta \mathbf{n}^{\nu}_{k}}{\delta x_{i}}\right|_{\xi_x}
\\
&
\qquad  \qquad
+ \sum_{q=1}^{\infty} \rho_{x}^{q} \sum_{l=1}^{n}b_{l,i,q}(\xi_{x},\omega_{x})\left. \frac{\delta a}{\delta \xi_{l} }\right|_{(\xi_{x},\omega_x)}  \qquad  \mbox{as }\rho_{x}\to0.
\end{align*}
The result follows from this asymptotic expansion formula.
\end{proof}
For future reference, let us collect in a corollary the actual formula that we have just established in the proof of Proposition \ref{tp1}.
\begin{corollary}
\label{tcfpartialdefivatives}
If $a\in C^{\infty}((\Sigma\cap U_{\nu}) \times \mathbb{S}^{d-1} )$ and $\mathbf{n}^{\nu}_{k}=(n^{\nu}_{k,1}, \dots, n^{\nu}_{k,n})$, then 
\begin{align*}
\frac{\partial}{\partial x_{i} } (\rho_{x}^{j} a(\xi_{x},\omega_{x}))&\sim \rho_{x}^{j-1}\left( j a(\xi_{x},\omega_{x})\sum_{l=1}^{d}\omega_{x,l} n^{\nu}_{l,i}
+\sum_{k=1}^{d}  \frac{\delta a}{\delta \omega_{k} }\left(n_{k,i}^{\nu}- {\omega_{x,k}}\sum_{h=1}^{d}\omega_{x,h} n^{\nu}_{h,i}\right)\right)
\\
&
\qquad \qquad 
+ \rho_{x}^{j}\left( \frac{\delta a}{\delta \xi_{i} }
+\sum_{k=1}^{d}\sum_{h=1}^{d}\omega_{x,h}  \frac{\delta a}{\delta \omega_{k} }\mathbf{n}^{\nu}_{h}\cdot \frac{\delta \mathbf{n}^{\nu}_{k}}{\delta x_{i}} \right)
\\
&
  \qquad   \qquad 
 + \sum_{q=j+1}^{\infty} \rho_{x}^{q} \sum_{l=1}^{n}b_{l,i,q-j}(\xi_{x},\omega_{x}) \frac{\delta a}{\delta \xi_{l} } \qquad  \mbox{as } \rho_{x}\to0,
\end{align*}
where the coefficient functions $b_{l,i,q-j} \in C^{\infty}((\Sigma\cap U_{\nu}) \times \mathbb{S}^{d-1} )$ are given by \eqref{eqdef b coefficient}.

\end{corollary}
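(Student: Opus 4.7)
Since Corollary \ref{tcfpartialdefivatives} is a clean restatement of the asymptotic formula already derived inside the proof of Proposition \ref{tp1}, my plan is essentially to read off the final display of that proof and verify term-by-term that it matches the statement here. I would start from the product rule
\[
\frac{\partial}{\partial x_i}\bigl(\rho_x^j a(\xi_x,\omega_x)\bigr) = j\rho_x^{j-1} a(\xi_x,\omega_x)\, \frac{\partial \rho_x}{\partial x_i} + \rho_x^j\, \frac{\partial}{\partial x_i}\bigl(a(\xi_x,\omega_x)\bigr),
\]
and substitute the formula $\partial \rho_x/\partial x_i = \sum_{l=1}^d \omega_{x,l}\, n^{\nu}_{l,i}(\xi_x)$ obtained from the gradient computation in the proof of Proposition \ref{tp1}. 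This already produces the leading contribution $\rho_x^{j-1}\, j\, a\, \sum_l \omega_{x,l}\, n^{\nu}_{l,i}$.

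Next I would apply the chain rule to $\partial(a(\xi_x,\omega_x))/\partial x_i$ using the smooth extension $A(\xi,\omega) = a(\pi(\xi),\omega/\|\omega\|)$, exactly as done inside the proof of Proposition \ref{tp1}. Three simplifications then enter: (a) the partials of $A$ become $\delta$-derivatives of $a$, by virtue of Lemma \ref{Tlemmaprojection}; (b) the tangential combination $\sum_l (\delta a/\delta \xi_l)\, (\partial \xi_l/\partial x_i)|_{\xi_x}$ collapses to $\delta a/\delta \xi_i$, because $d\pi_{\xi_x}$ is the orthogonal projection onto $T_{\xi_x}(\Sigma)$; and (c) the inner product $\mathbf{n}^{\nu}_k(\xi_x)\cdot (\partial/\partial x_i)(x-\xi_x)$ reduces to $n^{\nu}_{k,i}(\xi_x)$, again by the projection identity in Lemma \ref{Tlemmaprojection}.

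Once this is in place, I would feed in the Taylor-type strong expansion \eqref{eqdef b coefficient} of $\partial \xi_l/\partial x_i |_x$ around $\xi_x$, multiply the whole resulting expression by $\rho_x^j$, and collect by powers of $\rho_x$. The $(1/\rho_x)$-piece coming from differentiating $\omega_{x,k}$ adds to the $\rho_x^{j-1}$ level; the remaining regular terms assemble into the stated $\rho_x^j$ coefficient; and the positive-order tail of \eqref{eqdef b coefficient} supplies the $\rho_x^{j+q}$ contributions with coefficients $b_{l,i,q-j}(\xi_x,\omega_x)\, \delta a/\delta \xi_l$.

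There is no genuine conceptual obstacle: the only thing requiring care is the bookkeeping of which pieces land at order $\rho_x^{j-1}$ versus order $\rho_x^j$ after the factor $\rho_x^j$ in the second summand of the product rule meets the singular $1/\rho_x$ produced by differentiating the angular variables $\omega_{x,k}$. Provided one carries out this collection consistently, the displayed formula is immediate.
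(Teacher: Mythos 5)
Your plan is exactly what the paper does: the Corollary is declared to be a restatement of the formula obtained inside the proof of Proposition \ref{tp1}, and you correctly retrace that derivation (product rule, $\partial\rho_x/\partial x_i=\sum_l\omega_{x,l}n^{\nu}_{l,i}$, chain rule through $A(\xi,\omega)=a(\pi(\xi),\omega/\|\omega\|)$, the three simplifications via Lemma \ref{Tlemmaprojection}, the Taylor expansion \eqref{eqdef b coefficient}, and the regrouping by powers of $\rho_x$). The proposal is correct and matches the paper's argument.
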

If $\phi\in \mathcal{D}_{\ast, \Sigma}(\Omega)$ has the strong asymptotic expansion \eqref{teq1} on $\Omega\cap U_{\nu}\setminus\Sigma$, we denote by $a_{j,\alpha}=a_{j,\alpha}^{\phi,\nu}\in C^{\infty}(\Omega\cap U_{\nu}\setminus \Sigma)$ the $j$-th coefficient of the asymptotic expansion of $\partial^{\alpha} \phi /\partial x^{\alpha}$, so that
$$
\frac{\partial^{\alpha}\phi}{\partial x^{\alpha}}\sim \sum_{j=m-|\alpha|}^{\infty} a_{j,\alpha}(\xi_{x}, \omega_{x}) \: \rho_{x}^{j} \qquad \mbox{as }\rho_{x}\to0.
$$

We now topologize $\mathcal{D}_{\ast, \Sigma}(\Omega)$. We set $U=\bigcup_{\nu\in\mathbb{N}} U_{\nu}$. We first give to each $\mathcal{D}^{[m;K]}_{\ast, \Sigma}$ the structure of a Fr\'{e}chet space via the family of seminorms 
\[
\|\phi\|_{q,k,\nu,K,m} = \sup_{\underset {|\alpha|\leq k}{x\in K \cap U_{\nu}} } \rho_{x}^{-q}\Big|(\partial/\partial x)^{\alpha}\phi (x) -\sum_{j=m-|\alpha|}^{q-1} a_{j,\alpha}(\xi_{x}, \omega_{x}) \: \rho_{x}^{j} \Big|+ \sup_{\underset {|\alpha|\leq k}{x\in K \setminus U}} |(\partial/\partial x)^{\alpha} \phi (x) |,
\]
where $q>m$ and $k,\nu\in\mathbb{N}$. The locally convex space topology of $\mathcal{D}_{\ast, \Sigma}(\Omega)$ is then 
that of a strict inductive limit of Fr\'{e}chet spaces (LF-space) \cite{TBook} as the inductive union
$$
\mathcal{D}_{\ast, \Sigma}(\Omega)=\bigcup_{K\Subset \Omega,m\in\mathbb{Z}}\mathcal{D}^{[m;K]}_{\ast, \Sigma}=\varinjlim_{K\Subset \Omega,m\in\mathbb{Z}} \mathcal{D}^{[m;K]}_{\ast, \Sigma}.
$$
The following theorem collects some of the main properties of  $\mathcal{D}_{\ast, \Sigma}(\Omega)$.
\begin{theorem}
\label{tth1} $\mathcal{D}_{\ast, \Sigma}(\Omega)$ is a Montel space. Each partial derivative 
$$\frac{\partial^{\alpha}}{\partial x^{\alpha}}:\mathcal{D}_{\ast, \Sigma}(\Omega)\to \mathcal{D}_{\ast, \Sigma}(\Omega)
$$
is a continuous operator.  Moreover, the Schwartz test function space $\mathcal{D}(\Omega)$ is a closed subspace of $\mathcal{D}_{\ast, \Sigma}(\Omega)$. 
\end{theorem}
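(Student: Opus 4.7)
\emph{Continuity of partial derivatives.} By the strict (LF) topology, continuity of $\partial^{\alpha}/\partial x^{\alpha}$ reduces to continuity at each Fr\'{e}chet step map $\mathcal{D}^{[m;K]}_{\ast,\Sigma}(\Omega)\to\mathcal{D}^{[m-|\alpha|;K]}_{\ast,\Sigma}(\Omega)$, which is well-defined by Proposition \ref{tp1}. The identities $\partial^{\beta}(\partial^{\alpha}\phi)=\partial^{\alpha+\beta}\phi$ and $a_{j,\beta}^{\partial^{\alpha}\phi,\nu}=a_{j,\alpha+\beta}^{\phi,\nu}$ convert the defining seminorm of $\partial^{\alpha}\phi$ at index $(q,k,\nu,K,m-|\alpha|)$ into a supremum over a subfamily of the terms already present in $\|\phi\|_{q,k+|\alpha|,\nu,K,m}$, giving the bound $\|\partial^{\alpha}\phi\|_{q,k,\nu,K,m-|\alpha|}\leq\|\phi\|_{q,k+|\alpha|,\nu,K,m}$ and hence continuity.

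\emph{Montel property.} A strict (LF)-space is Montel iff each Fr\'{e}chet defining step is, so it suffices to show that each $\mathcal{D}^{[m;K]}_{\ast,\Sigma}(\Omega)$ is Fr\'{e}chet--Montel. For a bounded family $B$ in such a step, the uniform bounds on all seminorms $\|\cdot\|_{q,k,\nu,K,m}$ translate, via the diffeomorphism $\Phi_{\nu}$ of Lemma \ref{lemma1}(iii), into uniform bounds on derivatives of all orders of $\rho^{-m}\phi\circ\Phi_{\nu}$ on $(\Sigma\cap U_{\nu})\times\mathbb{S}^{d-1}\times[0,\rho_{\nu})$, while on $K\setminus U$ one has uniform $C^{\infty}$-bounds directly. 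An Arzel\`a--Ascoli extraction on each piece, diagonalized over the countable index set $(q,k,\nu)$, produces a subsequence Cauchy in every seminorm. The main technical obstacle is verifying that the seminorm bounds genuinely control all derivatives of each asymptotic coefficient $a_{j,\alpha}^{\phi,\nu}$ on $(\Sigma\cap U_{\nu})\times\mathbb{S}^{d-1}$, which is required for equicontinuity; this follows by expressing $a_{j,\alpha}^{\phi,\nu}$ as a Taylor-type extraction at $\rho=0$ from $\partial^{\alpha}\phi\circ\Phi_{\nu}$ and observing that such extraction commutes with $(\xi,\omega)$-differentiation, so that each $(\xi,\omega)$-derivative of $a_{j,\alpha}^{\phi,\nu}$ is itself controlled by the seminorms.

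\emph{Embedding and closedness of $\mathcal{D}(\Omega)$.} For $\varphi\in\mathcal{D}(\Omega)$, Taylor's theorem at $\pi(x)\in\Sigma\cap U_{\nu}$ together with $x-\pi(x)=\rho_{x}\sum_{k=1}^{d}\omega_{x,k}\mathbf{n}_{k}^{\nu}(\xi_{x})$ gives the strong asymptotic expansion $\varphi(x)\sim\sum_{j\geq 0}a_{j}^{\varphi,\nu}(\xi_{x},\omega_{x})\rho_{x}^{j}$ with
\[
a_{j}^{\varphi,\nu}(\xi,\omega)=\sum_{|\beta|=j}\frac{\partial^{\beta}\varphi(\xi)}{\beta!}\Bigl(\sum_{k=1}^{d}\omega_{k}\mathbf{n}_{k}^{\nu}(\xi)\Bigr)^{\beta},
\]
a polynomial in $\omega$ of degree $j$ with smooth $\xi$-dependence; Taylor's differentiated remainder estimate dominates the $\mathcal{D}_{\ast,\Sigma}$-seminorms of $\varphi$ by the standard $C^{k}$-seminorms on $\mathcal{D}(\Omega)$, while successive differences of seminorms $\|\varphi\|_{q+1,k,\nu,K,0}-\|\varphi\|_{q,k,\nu,K,0}$ recover pointwise bounds on the individual coefficients $a_{j,\alpha}^{\varphi,\nu}$ and hence on $\partial^{\alpha}\varphi$, so the induced topology matches the usual one and the inclusion is a topological embedding. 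For closedness, suppose $\varphi_{n}\to\psi$ in $\mathcal{D}_{\ast,\Sigma}(\Omega)$ with $\varphi_{n}\in\mathcal{D}(\Omega)$ eventually supported in a fixed $K$; the seminorm Cauchy property forces $a_{j,\alpha}^{\varphi_{n},\nu}\to a_{j,\alpha}^{\psi,\nu}$ pointwise, and since each $a_{j,\alpha}^{\varphi_{n},\nu}$ is a polynomial in $\omega$ of the above form, so is $a_{j,\alpha}^{\psi,\nu}$, with coefficients $c_{\beta}^{\psi}(\xi)$ smooth in $\xi$. In particular $a_{0,\alpha}^{\psi,\nu}(\xi,\omega)$ is independent of $\omega$ for every $\alpha$, which forces $\lim_{x\to\xi}\partial^{\alpha}\psi(x)$ to exist on each $\Sigma\cap U_{\nu}$; combined with the full strong asymptotic expansion at all orders, $\psi$ extends to a function in $C^{\infty}(\Omega)$ with support in $K$, hence $\psi\in\mathcal{D}(\Omega)$.
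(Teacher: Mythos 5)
Your proposal is correct and follows essentially the same three-part strategy as the paper: continuity via the step-map seminorm inequality $\|\partial^{\alpha}\phi\|_{q,k,\nu,K,m-|\alpha|}\leq\|\phi\|_{q,k+|\alpha|,\nu,K,m}$, the Montel property by reducing to the Fr\'{e}chet--Montel (in the paper, FS) property of each step and an Arzel\`{a}--Ascoli extraction, and closedness of $\mathcal{D}(\Omega)$ via Taylor's theorem in tubular coordinates together with the observation that the asymptotic coefficients of a smooth function must be homogeneous polynomials in $\omega$ and that this condition is preserved under step-wise limits. If anything, you supply more detail than the paper on the two points it explicitly leaves to the reader — namely, how the seminorms control the $(\xi,\omega)$-derivatives of the asymptotic coefficients $a_{j,\alpha}$ (you correctly note this needs a chain-rule argument showing that $x$-derivative bounds encode $\delta$- and $\omega$-derivative bounds on the coefficients), and that the polynomial form of the coefficients passes to the limit because the relevant spaces of homogeneous polynomials are finite-dimensional.
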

\begin{proof}
Since every strict LF-space is regular \cite{TBook}, it suffices to show that each $\mathcal{D}^{[m;K]}_{\ast, \Sigma}$ is an FS-space (Fr\'{e}chet-Schwartz space). The latter might directly be deduced from the Arzel\`{a}-Ascoli theorem via a classical argument  \cite{TBook}; we therefore leave the details to the reader. The continuity of the partial differential operators follows from Proposition \ref{tp1} and the definition of the seminorms $\| \ \: \|_{q,k,\nu,K,m}$. For the last claim, we localize. Similarly to Lemma \ref{lemma1}, we define the diffeomorphism $\Psi_{\nu}: (\Sigma\cap U_{\nu})\times B_{\rho_\nu}^{d}\to U_{\nu} $ given by $\Psi_{\nu}(\xi,y)=\xi + \sum_{k=1}^{d} y_k \mathbf{n}_{k}^{\nu}(\xi)$, where $B_{\rho_\nu}^{d}=\{y\in\mathbb{R}^{d}: \: \|y\|<\rho_\nu \}$. If $\varphi \in\mathcal{D}(\Omega)$, then $\varphi\circ \Psi_{\nu}$ is smooth on $(\Sigma\cap U_{\nu})\times B_{\rho_\nu}^{d}$, and, by Taylor's theorem applied to $\varphi\circ \Psi_{\nu}$,
\begin{equation}
\label{Teq3.3}
\varphi(x)\sim \sum_{j=0}^{\infty}  \rho_{x}^{j} \: \sum_{|\alpha|=j}\left.\frac{\partial^{\alpha}(\varphi\circ \Psi_{\nu})}{\partial y^{\alpha}} \right|_{(\xi_{x},0)} \frac{\omega_{x}^{\alpha}    
}{\alpha!}
\qquad  \mbox{as } \rho_{x}\to 0.
\end{equation}
Conversely, any element $\varphi \in \mathcal{D}_{\ast, \Sigma}(\Omega)$ that has strong asymptotic expansion
\begin{equation}
\label{teqExpD}
\varphi(x)\sim \sum_{j=0}^{\infty}  \rho_{x}^{j}\sum_{|\alpha|=j} c_{\alpha}(\xi_{x}) \:\omega_{x}^{\alpha}  \qquad  \mbox{as } \rho_{x}\to 0
\end{equation}
on each $U_{\nu}\setminus\Sigma$
for some smooth functions $c_{\beta}=c_{\beta,\nu}\in C^{\infty}(U_{\nu}\cap\Sigma)$
must admit a smooth extension to $\Omega$ and therefore belongs to $\mathcal{D}(\Omega)$. These observations imply that $\mathcal{D}(\Omega)\subset \mathcal{D}_{\ast, \Sigma}(\Omega)$  
 is a closed subspace (and that its usual canonical locally convex structure coincides with the relative topology inhered from $\mathcal{D}_{\ast, \Sigma}(\Omega)$).
 \end{proof}
 
We conclude this section discussing some properties of the space $\mathcal{E}_{\ast, \Sigma}(\Omega)$. We start by pointing out that $\mathcal{E}_{\ast, \Sigma}(\Omega)$ coincides with the space of multipliers of $\mathcal{D}_{\ast, \Sigma}(\Omega)$ (also known as its Moyal algebra). In fact, the reader can readily verify that given $\psi\in\mathcal{E}_{\ast, \Sigma}(\Omega)$, the multiplication  operators 
\begin{equation}
\label{teqmultip}M_{\psi}:\mathcal{D}_{\ast, \Sigma}(\Omega)\to\mathcal{D}_{\ast, \Sigma}(\Omega), \qquad M_{\psi}(\phi)= \psi \cdot \phi,
\end{equation}
are automatically continuous. 

 This space can also be provided with a natural topological vector space structure, but its locally convex topology is more complex than that of $\mathcal{D}_{\ast, \Sigma}(\Omega)$. Let $K\Subset \Omega$ be regular (i.e., $\operatorname*{int}K\neq \emptyset$) and let $\mathcal{E}^{[m;K]}_{\ast, \Sigma}$ be the Fr\'{e}chet space consisting of all those $\phi\in C^{\infty}(K\setminus \Sigma)$ such that their restrictions to each $U_{\nu}\cap K \setminus \Sigma$ have strong asymptotic expansions as in \eqref{teq1}. Its canonical Fr\'{e}chet space structure is generated by the family of seminorms $\{\|\ \: \|_{q,k,\nu,K,m}\}$. One might then check $\mathcal{E}_{\ast, \Sigma}(\Omega)=\bigcap_{K\Subset\Omega
} \bigcup_{m\in \mathbb{Z}}\mathcal{E}^{[m;K]}_{\ast, \Sigma}$, which yields the definition of a natural locally convex topology on it:
\[
\mathcal{E}_{\ast, \Sigma}(\Omega)=\varprojlim_{K\Subset\Omega
} \varinjlim_{m\in \mathbb{Z}}\mathcal{E}^{[ m;K]}_{\ast, \Sigma}\: .
\]

 \section{Distributions with a thick submanifold}\label{thick distributions section}
 We are ready to define our new distribution space having $\Sigma$ as a thick submanifold.
 
 \begin{definition}  \label{definition 2}  Let $\Omega\subset \mathbb{R}^{n}$ be an open subset such that $\Omega\cap\Sigma\neq \emptyset$. The space of distributions on $\Omega$ with thick submanifold $\Sigma$ is the dual space of $\mathcal{D}_{\ast,\Sigma}(\Omega)$ provided with the strong dual topology. We denote it by $\mathcal{D}'_{\ast,\Sigma}(\Omega)$ and call its elements \emph{thick distributions along} $\Sigma$ (or simply $\Sigma$-thick distributions).

 \end{definition}

 In view of Theorem \ref{tth1} (and \cite[Proposition 24.25]{MVBook}), the thick distribution space $\mathcal{D}'_{\ast,\Sigma}(\Omega)$ is Montel. We denote as $\Pi$ the transpose of the inclusion map  $\iota: \mathcal{D}(\Omega)\to \mathcal{D}_{\ast,\Sigma}(\Omega)$. It is a well-behaved projection.
 
 \begin{proposition} The map $\Pi: \mathcal{D}_{\ast,\Sigma}'(\Omega)\to \mathcal{D}'(\Omega)
 $  is an open continuous surjection.
 \end{proposition}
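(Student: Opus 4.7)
The plan is to check the three properties --- continuity, surjectivity, and openness --- in turn, the first two being essentially formal while the third is the technical core and the main obstacle. For continuity: since $\iota$ is continuous by Theorem~\ref{tth1}, for any bounded $B\subset \mathcal{D}(\Omega)$ the image $\iota(B)$ is bounded in $\mathcal{D}_{\ast,\Sigma}(\Omega)$, and a direct computation gives
\[
\sup_{\varphi\in B}|\langle \Pi T,\varphi\rangle|=\sup_{\psi\in \iota(B)}|\langle T,\psi\rangle|,
\]
so the strong seminorm of $\mathcal{D}'(\Omega)$ attached to $B$ pulls back along $\Pi$ to the strong seminorm of $\mathcal{D}'_{\ast,\Sigma}(\Omega)$ attached to $\iota(B)$; hence $\Pi$ is continuous.

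For surjectivity, I would apply the Hahn--Banach extension theorem. Theorem~\ref{tth1} asserts that $\iota$ realizes $\mathcal{D}(\Omega)$ as a closed subspace of $\mathcal{D}_{\ast,\Sigma}(\Omega)$ carrying exactly its canonical locally convex topology. Hence any $f\in\mathcal{D}'(\Omega)$, viewed as a continuous linear functional on this closed subspace, admits a continuous linear extension $\tilde f\in\mathcal{D}'_{\ast,\Sigma}(\Omega)$, and $\Pi(\tilde f)=f$ by construction of the transpose.

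Openness is the main obstacle. The cleanest route I see is to invoke an abstract open mapping theorem. By Theorem~\ref{tth1} and its proof, $\mathcal{D}_{\ast,\Sigma}(\Omega)$ is a strict $LF$-space whose defining steps $\mathcal{D}^{[m;K]}_{\ast,\Sigma}$ are Fr\'echet--Schwartz spaces, paralleling the structure of $\mathcal{D}(\Omega)$ itself. Consequently, both strong duals $\mathcal{D}'_{\ast,\Sigma}(\Omega)$ and $\mathcal{D}'(\Omega)$ are webbed in the sense of De Wilde, and $\mathcal{D}'_{\ast,\Sigma}(\Omega)$ is moreover ultrabornological (as the strong dual of a reflexive strict $LF$-space with Fr\'echet--Schwartz steps). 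De Wilde's open mapping theorem applied to the continuous linear surjection $\Pi$ between an ultrabornological and a webbed space now delivers openness. A direct quantitative Hahn--Banach attempt --- trying to lift, for a given bounded $\tilde B\subset\mathcal{D}_{\ast,\Sigma}(\Omega)$, each $f$ in the polar $B^\circ\subset\mathcal{D}'(\Omega)$ to some $\tilde f\in\tilde B^\circ$ --- would fail here because Hahn--Banach controls extensions only by continuous seminorms of the ambient space, whereas the Minkowski gauge of a bounded set is generally not continuous; this is precisely why the abstract open mapping machinery is needed.
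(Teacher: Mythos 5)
Your overall strategy coincides with the paper's: continuity is formal, surjectivity comes from Theorem~\ref{tth1} plus Hahn--Banach, and openness is obtained from De Wilde's open mapping theorem. However, your application of De Wilde's theorem contains a genuine gap.

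De Wilde's open mapping theorem asserts that a continuous linear surjection $T:E\to F$ is open when the \emph{domain} $E$ is webbed and the \emph{codomain} $F$ is ultrabornological. For $\Pi:\mathcal{D}'_{\ast,\Sigma}(\Omega)\to\mathcal{D}'(\Omega)$ you therefore must verify that $\mathcal{D}'_{\ast,\Sigma}(\Omega)$ is webbed (which you do) and that $\mathcal{D}'(\Omega)$ is ultrabornological --- but instead you argue that the \emph{domain} $\mathcal{D}'_{\ast,\Sigma}(\Omega)$ is ultrabornological, which is the wrong space for the theorem's hypothesis. Moreover, the justification you offer (``as the strong dual of a reflexive strict $LF$-space with Fr\'echet--Schwartz steps'') does not work in general: the strong dual of a strict $LF$-space with $FS$ steps is a $PLS$-space, and $PLS$-spaces are \emph{not} automatically ultrabornological. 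That $\mathcal{D}'(\Omega)$ is ultrabornological is a deep classical fact, going back to Grothendieck and S\l{}owikowski and surveyed in Doma\'{n}ski's article cited by the paper as \cite[Theorem~3.2]{D2004}; it cannot be deduced by the purely structural argument you sketch. The paper's proof invokes precisely this nontrivial result for the codomain $\mathcal{D}'(\Omega)$ and pairs it with the webbedness of the domain $\mathcal{D}'_{\ast,\Sigma}(\Omega)$ (citing \cite[Theorem~14.6.5]{NBBook}), which is the correct constellation of hypotheses. To repair your argument, replace the ultrabornologicity claim about $\mathcal{D}'_{\ast,\Sigma}(\Omega)$ with the classical theorem that $\mathcal{D}'(\Omega)$ is ultrabornological, and drop the ``both strong duals are webbed'' remark in favor of simply noting that $\mathcal{D}'_{\ast,\Sigma}(\Omega)$ is webbed; no claim about ultrabornologicity of $\mathcal{D}'_{\ast,\Sigma}(\Omega)$ is needed, and none is available by soft arguments.
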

\begin{proof} The continuity follows by definition. Theorem \ref{tth1} and the Hahn-Banach theorem ensure that $\Pi$ is surjective. Since $\mathcal{D}'(\Omega)$ is ultrabornological \cite[Theorem 3.2]{D2004} and $\mathcal{D}'_{\ast,\Sigma}(\Omega)$ is webbed \cite[Theorem 14.6.5]{NBBook}, De Wilde's open mapping theorem \cite[p. 481]{NBBook} applies and therefore $\Pi$ is open.
\end{proof}

Theorem \ref{tth1} also allows us to talk about partial derivative operators on $\mathcal{D}_{\ast,\Sigma}'(\Omega)$, which we shall denote as $\partial_{\ast}^{\alpha}/\partial x^{\alpha}$. In fact, as in the case of the classical distributional derivatives, we define the latter as the transpose of $(-1)^{|\alpha|}\partial^{\alpha}/\partial x^{\alpha}$, that is, given $f\in\mathcal{D}_{\ast,\Sigma}'(\Omega)$, we have that the action of $(\partial_{\ast}^{\alpha}/\partial x^{\alpha})f\in\mathcal{D}_{\ast,\Sigma}'(\Omega)$ at $\phi \in\mathcal{D}_{\ast,\Sigma}(\Omega)$ is given by
\[
\Big\langle \frac{\partial_{\ast}^{\alpha}f}{\partial x^{\alpha}},\phi \Big\rangle= (-1)^{|\alpha|} \Big\langle f, \frac{\partial^{\alpha}\phi}{\partial x^{\alpha}}
\Big\rangle.
\]

Given $\psi\in \mathcal{E}_{\ast,\Sigma}(\Omega)$, we can extend the definition of multiplication by $\psi$ to $\mathcal{D}'_{\ast,\Sigma}(\Omega)$ as the transpose of \eqref{teqmultip}, that is, 
\[  \langle \psi f, \phi \rangle= \langle f, \psi \phi \rangle,
\]
for each test function $\phi\in \mathcal{D}_{\ast,\Sigma}(\Omega)$.

The next facts directly follow from the definitions, but we state them as a proposition in order to highlight their importance for the theory of thick distributions.

\begin{proposition}
\label{td proposition differentiation}
For each $f\in\mathcal{D}_{\ast,\Sigma}'(\Omega)$ and $\alpha\in\mathbb{N}^{n}$, we have
\[
\Pi\left(\frac{\partial_{\ast}^{\alpha}f}{\partial x^{\alpha}}\right)= \frac{\bar{\partial}^{\alpha}\Pi(f)}{\partial x^{\alpha}}.
\]
Furthermore, if $\psi \in \mathcal{E}_{\ast,\Sigma}(\Omega)$, then 
\[ \frac{\partial^{\alpha}_{\ast}(\psi f)}{\partial x^{\alpha}}= \sum_{\beta\leq \alpha} \binom{\alpha}{\beta}  \frac{\partial^{\beta}\psi }{\partial x^{\beta}}\ \frac{\partial^{\alpha-\beta}_{\ast}f}{\partial x^{\alpha-\beta}}\: .
\]
\end{proposition}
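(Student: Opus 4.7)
The plan is a straightforward duality chase. For the first identity, I would fix an arbitrary $\varphi\in \mathcal{D}(\Omega)$ and unfold both sides using the definitions of $\Pi$ (the transpose of $\iota$) and of the thick and classical distributional derivatives. The key observation needed is that $\iota:\mathcal{D}(\Omega)\to \mathcal{D}_{\ast,\Sigma}(\Omega)$ intertwines classical partial differentiation with the thick partial differentiation operator of Proposition \ref{tp1}: for $\varphi\in\mathcal{D}(\Omega)$, the derivative $\partial^{\alpha}\varphi/\partial x^{\alpha}$ is again a Schwartz test function whose strong asymptotic expansion on each $U_{\nu}\setminus\Sigma$ is the termwise differentiated Taylor expansion \eqref{teqExpD} of $\varphi$, so that $\iota$ commutes with $\partial^{\alpha}/\partial x^{\alpha}$. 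Inserting this into the transpose chain
\begin{align*}
\Big\langle \Pi\Big(\frac{\partial_{\ast}^{\alpha}f}{\partial x^{\alpha}}\Big),\varphi\Big\rangle
&= \Big\langle \frac{\partial_{\ast}^{\alpha}f}{\partial x^{\alpha}},\iota(\varphi)\Big\rangle = (-1)^{|\alpha|}\Big\langle f, \iota\Big(\frac{\partial^{\alpha}\varphi}{\partial x^{\alpha}}\Big)\Big\rangle \\
&= (-1)^{|\alpha|}\Big\langle \Pi(f), \frac{\partial^{\alpha}\varphi}{\partial x^{\alpha}}\Big\rangle = \Big\langle \frac{\bar{\partial}^{\alpha}\Pi(f)}{\partial x^{\alpha}},\varphi\Big\rangle
\end{align*}
gives the first claim.

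For the Leibniz identity, I would argue by induction on $|\alpha|$. A preliminary step is to verify that $\partial^{\beta}\psi/\partial x^{\beta}\in\mathcal{E}_{\ast,\Sigma}(\Omega)$ for every $\beta$, so that the right-hand side of the formula is even meaningful; this follows from the multiplier characterization of $\mathcal{E}_{\ast,\Sigma}(\Omega)$ by iterating the pointwise identity $\varphi\,\partial_{i}\psi=\partial_{i}(\varphi\psi)-(\partial_{i}\varphi)\psi$ on $\Omega\setminus\Sigma$ combined with Proposition \ref{tp1}. The base case $|\alpha|=1$, say $\alpha=e_{i}$, is then a single transpose calculation: for $\phi\in\mathcal{D}_{\ast,\Sigma}(\Omega)$ the classical Leibniz rule on $\Omega\setminus\Sigma$ yields $\psi\,(\partial\phi/\partial x_{i})=\partial(\psi\phi)/\partial x_{i}-(\partial\psi/\partial x_{i})\phi$, and pairing with $f$ via the definitions of thick multiplication and thick differentiation rewrites $\langle\partial_{\ast}(\psi f)/\partial x_{i},\phi\rangle=-\langle f,\psi\,\partial\phi/\partial x_{i}\rangle$ as $\langle\psi\,\partial_{\ast}f/\partial x_{i},\phi\rangle+\langle(\partial\psi/\partial x_{i})f,\phi\rangle$. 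The inductive step is then routine combinatorial bookkeeping based on Pascal's rule.

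The whole argument is purely formal, so no analytic obstacle is expected; the main thing to keep straight is the distinction between $\partial^{\beta}\psi/\partial x^{\beta}$ (a classical partial derivative of a smooth multiplier) and $\partial_{\ast}^{\alpha-\beta}f/\partial x^{\alpha-\beta}$ (a thick distributional derivative). Once that is done, everything reduces to identities already available on the test-function and multiplier sides.
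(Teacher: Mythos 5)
Your argument is correct and is precisely the ``directly follows from the definitions'' fill-in that the paper leaves implicit (the paper gives no proof of Proposition \ref{td proposition differentiation}). The duality chase for the first identity correctly hinges on the fact that the inclusion $\iota$ intertwines classical differentiation on $\mathcal{D}(\Omega)$ with the differentiation on $\mathcal{D}_{\ast,\Sigma}(\Omega)$ from Proposition \ref{tp1}, which is obvious since the latter is just the classical partial derivative restricted to $\Omega\setminus\Sigma$. For the Leibniz rule, your preliminary check that $\partial^{\beta}\psi/\partial x^{\beta}\in\mathcal{E}_{\ast,\Sigma}(\Omega)$ via $\varphi\,\partial_i\psi=\partial_i(\varphi\psi)-(\partial_i\varphi)\psi$ together with Proposition \ref{tp1} is a worthwhile detail the paper passes over silently, and the transpose computation in the base case $|\alpha|=1$, followed by induction with Pascal's rule, is the standard and correct route.
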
 

We can also perform certain changes of variables. Indeed, let $\Psi: \tilde{\Omega}\to  \Omega$ be a diffeomorphism that maps $\tilde{\Omega}\cap\tilde{\Sigma}$  onto $\Omega\cap\Sigma$, where $\tilde{\Sigma}$ is another $d$-codimensional closed submanifold of $\mathbb{R}^{n}$. Naturally, we always have $d\Psi_{\xi}(T_{\xi} (\tilde{\Sigma}))= T_{\Psi(\xi)}(\Sigma)$. If we additionally require
\begin{equation}
\label{teqdiffcv}
d\Psi_{y}(N_{y}( \tilde{\Sigma}))= N_{\Psi(y)}(\Sigma),  \qquad y\in\tilde{\Omega}\cap \tilde{\Sigma},
\end{equation}
for the normal spaces, then the pullback map $\Psi^{\ast}: \mathcal{D}_{\ast,\Sigma}(\Omega)\to \mathcal{D}_{\ast,\tilde{\Sigma}}(\tilde{\Omega})$ is an isomorphism of locally convex spaces, where as usual $\Psi^{\ast}\phi =\phi\circ \Psi$. We can also pullback $\Sigma$-thick distributions along the diffeomorphism, so that $\Psi^{\ast}: \mathcal{D}'_{\ast,\Sigma}(\Omega)\to \mathcal{D}'_{\ast,\tilde{\Sigma}}(\tilde{\Omega})$, where we now define $\Psi^{\ast}f$ in the canonical way, namely,
\[\langle \Psi^{\ast} f, \varphi \rangle= \langle f(\Psi(y)), \varphi(y) \rangle= \Big\langle f(x), \frac{\varphi(\Psi^{-1}(x))}{|\det (d\Psi_{\Psi^{-1}(x)})|} \Big\rangle= \langle f, (\Psi^{-1})^{\ast}\left( \varphi \cdot |\det (d\Psi)|^{-1}\right) \rangle.
\]

We end this section with two important remarks.

\begin{remark}\label{trmSheaf} It is useful to extend the definition of $\mathcal{D}'_{\ast,\Sigma}$ to \emph{any} open subset $\Omega\subseteq \mathbb{R}^{n}$ as follows:
\begin{equation}
\label{teqsheaf}
\mathcal{D}'_{\ast,\Sigma}(\Omega)= 
\begin{cases}
\mathcal{D}'_{\ast,\Sigma}(\Omega) & \mbox{if }\Omega\cap\Sigma \neq \emptyset, \\
\mathcal{D}'(\Omega) & \mbox{if }\Omega\cap\Sigma= \emptyset.
\end{cases}
\end{equation}
Under this convention $\mathcal{D}'_{\ast,\Sigma}: \Omega \to \mathcal{D}'_{\ast,\Sigma}(\Omega)$ becomes a fine sheaf of locally convex spaces on $\mathbb{R}^{n}$, as follows from the existence of usual smooth partitions of unity. In particular, one may always speak about $\operatorname*{supp}f$, the support of a $\Sigma$-thick distribution $f\in\mathcal{D}'_{\ast,\Sigma}(\Omega)$. The projection operator $\Pi$ is local, namely, $\operatorname*{supp}\Pi(f)\subseteq\operatorname*{supp}f$. Actually, \eqref{teqsheaf} implies that $(\operatorname*{supp}f) \setminus (\operatorname*{supp}\Pi(f)) \subseteq \Sigma$.

Notice that the space of compact sections of $\mathcal{D}'_{\ast,\Sigma}(\Omega)$ coincides with $\mathcal{E}'_{\ast,\Sigma}(\Omega)$, exactly in the same way as $\mathcal{E}'(\Omega)$ is identified with the space of compactly supported distributions on $\Omega$.
\end{remark}
\begin{remark}\label{trmproj} If $f\in \operatorname*{ker}\Pi$, we must necessarily have that $\operatorname*{supp}f\subseteq \Sigma$. In order to determine which $\Sigma$-thick distributions supported on $\Sigma$ belong to $\operatorname*{ker}\Pi$, let us fix a smooth partition of unity $\{\chi_{\nu}\}_{\nu\in\mathbb{N}}$ on $\Sigma$ that is subordinate to $\{\Omega\cap \Sigma \cap U_{\nu}\}_{\nu\in\mathbb{N}}$. Localizing and changing variables, we first notice that $\operatorname*{supp}f\subseteq\Sigma$ if and only if, for each $\nu$, there is a sequence of distributions  
$\{f_{\nu, j}\}_{  j\in \mathbb{Z}}$ of $\mathcal{D}'((\Omega\cap \Sigma \cap U_{\nu})\times \mathbb{S}^{d-1})$ such that, for each $m$, there are $J_{m,\nu}\in \mathbb{N}$ such that
\begin{equation}
\label{TeqSupSigma}
\langle f,\phi \rangle = \sum_{\nu=1}^{\infty} \sum_{j=m}^{J_{m,\nu}} \langle f_{\nu,j}(\xi,\omega), a_{j}^{\phi,\nu}(\xi,\omega) \chi_{\nu}(\xi) \rangle
\end{equation}
for each $\phi \in \mathcal{D}_{\ast,\Sigma}^{[m]} (\Omega)=\bigcup_{K\Subset \Omega}\mathcal{D}_{\ast,\Sigma}^{[m; K]}  (\Omega) .$ Since the restriction of each $\phi\in\mathcal{D}(\Omega)$ to $\Omega\cap U_{\nu}\setminus\Sigma$ has strong asymptotic expansion \eqref{teqExpD}, we obtain that
\[f\in \operatorname*{ker} \Pi \iff \langle f_{\nu,j}(\xi,\omega), \omega^{\alpha} \rangle=0, \quad \forall j\geq 0,\forall \alpha\in\mathbb{N}^{d} \mbox{ with }|\alpha|=j, \]
where $f$ admits the representation \eqref{TeqSupSigma}.
\end{remark}

 \section{Examples of thick distributions along a submanifold}
 \label{Section examples}

We shall now discuss two particular classes of thick distributions along the submanifold $\Sigma$. We fix again an open set $\Omega$ with $\Omega\cap \Sigma \neq\emptyset$. In particular, we shall explain in Subsection \ref{T Fp subsection} how one can embed $\mathcal{E}_{\ast, \Sigma}(\Omega)$ into $\mathcal{D}'_{\ast, \Sigma}(\Omega)$, which, unlike in classical distribution theory, requires a choice of a regularization procedure. In Subsection \ref{T derivatives subsection}, we compute the partial derivatives of various $\Sigma$-thick distributions.
\subsection{Finite part $\Sigma$-thick distributions}
\label{T Fp subsection}
If $\psi\in \mathcal{E}_{\ast, \Sigma}(\Omega)$ and $\phi\in\mathcal{D}_{\ast, \Sigma}(\Omega)$, the integral
\begin{equation}
\label{Teq5.1}
\int_{\Omega} \psi(x)\phi(x)\mathrm{d}x
\end{equation}
will be divergent in general. Observe that this will be the case even if $\psi\in C^{\infty}(\Omega)$. We shall show that the divergent integrals \eqref{Teq5.1} always admit regularization via a special finite part limit procedure. Naturally, the method will work to regularize many other $\psi\in L^{1}_{loc}(\Omega\setminus \Sigma)$.
 
We recall that the finite part \cite{EKBook,EV17} of the limit of $F\left(
\varepsilon\right)  $ as $\varepsilon\rightarrow0^{+}$ exists and equals $\gamma$
if we can write $F\left(  \varepsilon\right)  =F_{\mathrm{fin}}\left(
\varepsilon\right)  +F_{\mathrm{infin}}\left(  \varepsilon\right)  ,$ where
the \emph{infinite} part $F_{\mathrm{infin}}\left(  \varepsilon\right)  $ is a
linear combination of functions of the type $\varepsilon^{-p}\ln
^{q}\varepsilon$ (with $q>0$ if $p=0$) and where the \emph{finite}
part $F_{\mathrm{fin}}\left(  \varepsilon\right)  $ is a function whose limit
as $\varepsilon\rightarrow0^{+}$ is $\gamma$. In such a case, one writes $\gamma=\operatorname*{F.p.} \lim_{\varepsilon \to 0^{+}} F(\varepsilon)$.

\smallskip

\begin{definition}
\label{Definition:FinitePart}Let $f\in L^{1}_{loc}(\Omega\setminus \Sigma)$. The $\Sigma$-thick
distribution $\operatorname*{Pf}\left(  f \right)  $ is
defined as
\begin{equation}
 \label{Dis.3}
\left\langle \operatorname*{Pf}\left(  f\right)  
,\phi  \right\rangle =\mathrm{F.p.}\lim_{\varepsilon\to 0^{+}} \int
_{\Omega\setminus \Sigma^{\varepsilon }}f\left( x\right)  \phi\left(
x\right)  \mathrm{d}x
\end{equation}
provided these finite part limits exist for all $\phi\in\mathcal{D}
_{\ast,\Sigma}\left( \Omega\right)  .$ We also simply denote \eqref{Dis.3} by $\mathrm{F.p.}\int
_{\Omega}f\left( x\right)  \phi\left(
x\right)  \mathrm{d}x$, the (tubular) finite part of the integral along $\Sigma$.
\end{definition}

Let us discuss an important example. We write $\rho$ for the function 
$$
\rho(x)=\operatorname*{dist}(x, \Sigma),
$$
and consider its powers $\rho^{\lambda}$ in the next example. When $x\in\Sigma^{\epsilon}$, we have $\rho(x)=\rho_x=\|x-\pi(x)\|$.

\begin{example}
\label{Example:r a la l}Let $\lambda\in\mathbb{C}$. We verify that
$\operatorname*{Pf}\left(  \rho^{\lambda}\right)  $ is always a well-defined element of $\mathcal{D}_{\ast, \Sigma}^{\prime}\left(  \mathbb{R}^{n}
\right) .$ Let $\phi\in \mathcal{D}_{\ast,\Sigma}(\Omega)$. Using a partition of unity, we may reduce the general case to $\operatorname*{supp}\phi\subset U_{\nu}$. Thus, we may assume the latter, so that the asymptotic expansion \eqref{teq1} holds globally. Then, if $\eta>0$ is such that $\Sigma^{\eta}\cap \operatorname*{sup}\phi \subset \Sigma^{\epsilon}$, we obtain  

\begin{align*}
&\mathrm{F.p.}\int_{\Omega}\rho^\lambda(x)  \phi\left(
x\right)  \mathrm{d}x=\int_{\Omega\setminus\Sigma^{\eta}}\rho^\lambda (x) \phi\left(
x\right)  \mathrm{d}x 
\\
&\qquad  +\int_{\Sigma^{\eta}}\rho_{x}^{\lambda +d-1} \left (\phi(
x)-\sum_{j\leq - \Re e\:\lambda -d }\rho_x^{j} a_{j}(\xi_x,\omega_x)\right)  \mathrm{d}x 
\\&
\qquad
+\sum_{j\leq - \Re e\:\lambda -d }\mathrm{F.p.} \int_{0}^{\eta} \iint_{\Sigma\times \mathbb{S}^{d-1}}
 \rho^{\lambda +j+d-1}a_j(\xi,\omega)   \mathrm{d}\rho\mathrm{d}\sigma(\xi,\omega),
\end{align*}
so that

\begin{align*}
\langle \operatorname*{Pf}(r^{\lambda}), \phi\rangle=&\int_{\Omega\setminus\Sigma^{\eta}}\rho^\lambda (x) \phi\left(
x\right)  \mathrm{d}x +\int_{\Sigma^{\eta}}\rho_{x}^{\lambda +d-1} \left (\phi(
x)-\sum_{j\leq - \Re e\:\lambda -d }\rho_x^{j} a_{j}(\xi_x,\omega_x)\right)  \mathrm{d}x 
\\& \qquad
+ \sum_{j\leq - \Re e\:\lambda -d }\frac{\eta^{\lambda+j+d}}{\lambda+j+d} \iint_{\Sigma\times \mathbb{S}^{d-1}}
a_j(\xi,\omega)\mathrm{d}\sigma(\xi,\omega) 
\end{align*}
if $\lambda\notin \mathbb{Z}$, while if $\lambda=k \in\mathbb{Z}$, we have
\begin{align*}
&\langle \operatorname*{Pf}(r^{k}), \phi\rangle=\int_{\Omega\setminus\Sigma^{\eta}}\rho^k (x) \phi\left(
x\right)  \mathrm{d}x +\int_{\Sigma^{\eta}}\rho_{x}^{k +d-1} \left (\phi(
x)-\sum_{j\leq - k -d }\rho_x^{j} a_{j}(\xi_x,\omega_x)\right)  \mathrm{d}x 
\\&
+ \ln \eta \iint_{\Sigma\times \mathbb{S}^{d-1}}
a_{-k-d}(\xi,\omega)\mathrm{d}\sigma(\xi,\omega) +\sum_{j< - k -d } \frac{\eta^{k+j+d}}{k+j+d}\iint_{\Sigma\times \mathbb{S}^{d-1}}
a_j(\xi,\omega)\mathrm{d}\sigma(\xi,\omega).
\end{align*}

\end{example}

\bigskip

Example \ref{Example:r a la l}  then yields the following corollary.

\begin{corollary}
\label{Tcor1} $\operatorname*{Pf}(\psi)\in \mathcal{D}'_{\ast,\Sigma}(\Omega)$ exists for all $\psi \in\mathcal{E}_{\ast,\Sigma}(\Omega)$.
\end{corollary}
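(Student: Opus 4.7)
The plan is to reduce the general case to the special case $\psi=1$ already treated in Example \ref{Example:r a la l} (with $\lambda=0$) by exploiting the multiplier property of $\mathcal{E}_{\ast,\Sigma}(\Omega)$. For any $\psi\in\mathcal{E}_{\ast,\Sigma}(\Omega)$ and $\phi\in\mathcal{D}_{\ast,\Sigma}(\Omega)$, the pointwise equality $\psi(x)\phi(x)=(\psi\phi)(x)$ on $\Omega\setminus\Sigma$ yields the tautology
$$\mathrm{F.p.}\lim_{\varepsilon\to 0^{+}}\int_{\Omega\setminus\Sigma^{\varepsilon}}\psi(x)\phi(x)\,\mathrm{d}x=\mathrm{F.p.}\lim_{\varepsilon\to 0^{+}}\int_{\Omega\setminus\Sigma^{\varepsilon}}(\psi\phi)(x)\,\mathrm{d}x$$
whenever either side is defined. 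The multiplier property recorded around \eqref{teqmultip} asserts that $M_{\psi}:\phi\mapsto\psi\phi$ is a continuous linear self-map of $\mathcal{D}_{\ast,\Sigma}(\Omega)$; in particular, $\psi\phi$ is again a $\mathcal{D}_{\ast,\Sigma}$-test function.

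Specializing Example \ref{Example:r a la l} to the constant function $1=\rho^{0}$ (case $\lambda=k=0$) furnishes $\operatorname*{Pf}(1)\in\mathcal{D}'_{\ast,\Sigma}(\Omega)$ together with an explicit closed-form expression for $\langle\operatorname*{Pf}(1),\widetilde\phi\rangle$: namely, the honest Lebesgue integral of $\widetilde\phi$ over $\Omega\setminus\Sigma^{\eta}$, a truncated-remainder integral over $\Sigma^{\eta}$ in which the finitely many expansion terms with $j\leq -d$ have been subtracted from $\widetilde\phi$, and explicit boundary contributions $\ln\eta\iint a_{-d}^{\widetilde\phi,\nu}\,\mathrm{d}\sigma$ and $\sum_{j<-d}(\eta^{j+d}/(j+d))\iint a_{j}^{\widetilde\phi,\nu}\,\mathrm{d}\sigma$. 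Evaluating this functional at $\widetilde\phi=\psi\phi$, the right-hand side of the displayed identity exists for every $\phi$, so the left-hand side exists too; and the resulting functional $\operatorname*{Pf}(\psi)=\operatorname*{Pf}(1)\circ M_{\psi}$ is a composition of two continuous linear maps, hence an element of $\mathcal{D}'_{\ast,\Sigma}(\Omega)$.

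What is concealed in the above, and is in my view the main technical step, is the continuity of $\operatorname*{Pf}(1)$ on the LF-space $\mathcal{D}_{\ast,\Sigma}(\Omega)$; Example \ref{Example:r a la l} asserts that $\operatorname*{Pf}(\rho^{\lambda})$ lies in $\mathcal{D}'_{\ast,\Sigma}(\Omega)$ but leaves this verification implicit. To supply it, I would fix $K\Subset\Omega$ and $m\in\mathbb{Z}$, restrict to the step $\mathcal{D}^{[m;K]}_{\ast,\Sigma}(\Omega)$, note that only finitely many charts $U_{\nu}$ meet $K$, and dominate each term of the closed-form expression by the seminorms $\|\phi\|_{q,0,\nu,K,m}$: the Lebesgue integral and the truncated-remainder integral are bounded by a single such seminorm for $q=-d+1$ (so that exactly the divergent terms $j\leq-d$ are subtracted), while the boundary terms require the continuity of the coefficient maps $\phi\mapsto a_{j}^{\phi,\nu}$ into $C^{\infty}((\Sigma\cap U_{\nu})\times\mathbb{S}^{d-1})$. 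This last continuity in turn follows from the \emph{strong} nature of the asymptotic expansion built into Definition \ref{def3.2}: the seminorms $\|\phi\|_{q,k,\nu,K,m}$ control the truncation remainders together with all their $\xi,\omega$-derivatives, so each coefficient and each of its derivatives is a continuous linear functional of $\phi$. Continuity on every step of the strict LF-space then promotes to continuity on $\mathcal{D}_{\ast,\Sigma}(\Omega)$, completing the proof.
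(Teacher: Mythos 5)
Your argument is correct and coincides with the paper's own one-line proof, which is simply $\operatorname*{Pf}(\psi)=\psi\cdot\operatorname*{Pf}(1)$: since multiplication of a $\Sigma$-thick distribution by $\psi\in\mathcal{E}_{\ast,\Sigma}(\Omega)$ is defined as the transpose of $M_{\psi}$, your identity $\operatorname*{Pf}(\psi)=\operatorname*{Pf}(1)\circ M_{\psi}$ is the very same statement. The extra paragraph in which you verify continuity of $\operatorname*{Pf}(1)$ on each step $\mathcal{D}^{[m;K]}_{\ast,\Sigma}$ via the seminorms is a careful filling-in of a check that Example \ref{Example:r a la l} leaves implicit, not a different route.
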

\begin{proof} In fact, $\operatorname*{Pf}(\psi)= \psi \cdot \operatorname*{Pf}(1)$.
\end{proof}

We therefore have a way to embed  $\mathcal{E}_{\ast,\Sigma}(\Omega)$ into $\mathcal{D}'_{\ast,\Sigma}(\Omega)$ via the continuous linear mapping $ \operatorname*{Pf}: \mathcal{E}_{\ast,\Sigma}(\Omega)\to\mathcal{D}'_{\ast,\Sigma}(\Omega)$. On the other hand, we warn the reader that one should proceed with care at manipulating this embedding, because for example, it does not commute with partial derivatives (see Corollary \ref{cor der test} in Subsection \ref{T derivatives subsection} below).

\subsection{Thick deltas and multilayers along $\Sigma$}
\label{T deltas subsection} 
In analogy to the Dirac delta, it is useful to consider the ``delta function'' concentrated at $\Sigma$, that is, the distribution $\delta_{\Sigma}\in\mathcal{D}'(\mathbb{R}^{n})$ given by 
$$
\langle \delta_{\Sigma}, \varphi \rangle=\int_{\Sigma} \varphi(\xi)\mathrm{d}\sigma(\xi), \qquad \varphi\in \mathcal{D}(\mathbb{R}^{n}).
$$
More generally \cite{EKBook,K04}, if $h\in \mathcal{D}'(\Sigma)$, its associated single layer distribution $h\delta_{\Sigma}\in\mathcal{D}'(\mathbb{R}^{n})$ is defined by
$$
\langle h \delta_{\Sigma}, \varphi \rangle=\langle h, \left.\varphi\right|_{\Sigma}\rangle \qquad \varphi\in \mathcal{D}(\mathbb{R}^{n}).
$$

We wish to consider 
$\Sigma$-thick generalizations of these distributions. For the sake of exposition\footnote{It is still possible to drop the orientability assumption in this section by using partitions of unity and working locally, but we have decided not to do so in order to keep the exposition more transparent.}, we assume in the remainder of this subsection that the manifold is orientable, so that it is possible to choose a global normal orthonormal smooth frame $\{\mathbf{n}_{1}, \dots, \mathbf{n}_{d}\}$, which we now fix, and then the defining asymptotic expansions \eqref{teq1}  for test functions $\phi\in\mathcal{D}_{\ast,\Sigma}(\mathbb{R}^{n})$ hold globally on the $\epsilon$-neighborhood $\Sigma^{\epsilon}$.

Let $|\mathbb{S}^{d-1}|=\int_{\mathbb{S}^{d-1}} \mathrm{d}\omega$. We start by introducing the ``plain'' $\Sigma$-think delta distribution $\delta_{\ast,\Sigma}\in \mathcal{D}'_{\ast, \Sigma}(\mathbb{R}^{n})$ via
\[
\langle \delta_{\ast,\Sigma}, \phi \rangle=\frac{1}{|\mathbb{S}^{d-1}|}\iint_{\Sigma\times \mathbb{S}^{d-1}} a_{0}(\xi,\omega)\mathrm{d}\sigma(\xi)\mathrm{d}\sigma(\omega), \qquad \phi\in \mathcal{D}_{\ast, \Sigma}(\mathbb{R}^{n}).
\]
When $\phi\in\mathcal{D}(\mathbb{R}^{n})$, $a_{0}(\xi,\omega)=\phi(\xi)$, whence $\Pi(\delta_{\ast,\Sigma})=\delta_{\Sigma}$. 

If $g\in \mathcal{D}'((\Omega\cap \Sigma)\times \mathbb{S}^{d-1})$, we can also introduce the $\Sigma$-thick single layer distribution $g \delta_{\ast,\Sigma}\in \mathcal{D}'_{\ast, \Sigma}(\Omega)$ as
\[
\langle g\delta_{\ast,\Sigma}, \phi \rangle=\frac{1}{|\mathbb{S}^{d-1}|}\langle g, a_{0}\rangle , \qquad \phi\in \mathcal{D}_{\ast, \Sigma}(\Omega).
\]
We have that $\Pi(g\delta_{\ast,\Sigma})= h \delta_{\Sigma}$ is a single layer along $\Sigma\cap\Omega$, where $h\in\mathcal{D}'(\Sigma)$ is given by $h(\xi)= |\mathbb{S}^{d-1}|^{-1}\langle g(\xi,\omega), 1 \rangle_{\mathcal{D}'(\mathbb{S}^{d-1})\times \mathcal{D}(\mathbb{S}^{d-1})}$, i.e., 
\[
\langle h\delta_{\Sigma}, \varphi \rangle=\frac{1}{|\mathbb{S}^{d-1}|}\langle g, \left.\varphi\right|_{\Sigma}\otimes 1\rangle , \qquad \varphi\in \mathcal{D}(\Omega).
\]
In particular, when $g$ just depends on $\omega$, that is, if it has the form $g=1 \otimes g_0$ with $g_0\in \mathcal{D}'(\mathbb{S}^{d-1})$, we obtain that $\Pi(g\delta_{\ast,\Sigma})=c\delta_{\Sigma}$, where the constant $c$ is 
$$
c= \frac{1}{|\mathbb{S}^{d-1}|}\langle g_0, 1\rangle_{\mathcal{D}'(\mathbb{S}^{d-1})\times \mathcal{D}(\mathbb{S}^{d-1})}.
$$

We can generalize these ideas to $\Sigma$-thick multilayer distributions:

\begin{definition}
Let $g \in\mathcal{D}'((\Omega\cap \Sigma)\times \mathbb{S}^{d-1}) $ and $j\in\mathbb{Z}$. The $\Sigma$-thick delta function of degree $j$ associated with $g$ is the  $\Sigma$-thick distribution $g\delta_{\ast,\Sigma}^{[ j] } \in \mathcal{D}'_{\ast, \Sigma}(\Omega)$ whose action on $\phi\in\mathcal{D}_{\ast,\Sigma}(\Omega)$ is given by
\begin{equation*}
\left\langle g\delta_{\ast,\Sigma}^{[ j]  },\phi\right\rangle
_{\mathcal{D}_{\ast,\Sigma}^{\prime}\left( \Omega \right)
\times\mathcal{D}_{\ast, \Sigma}\left(  \Omega \right)  }=\frac
{1}{|\mathbb{S}^{d-1}|}\left\langle g (\xi,\omega)  ,a_j(\xi,\omega)  \right\rangle.
\end{equation*}
If $j=0$, we simply write $g\delta_{\ast,\Sigma}^{\left[ 0\right] } =g\delta_{\ast,\Sigma} $, as defined before.
\end{definition}

\smallskip

Interestingly, Remark \ref{trmproj} can be rephrased in terms of $\Sigma$-thick deltas. We should call a two-sided sequence $\{g_j\}_{j=-\infty}^{\infty}$ of distributions locally upper finite if on any relatively compact open set $W$ there is $J$ such that ${g_{j}}_{|W}=0$ for all $j>J$. 
\begin{proposition} Let $f\in \mathcal{D}'_{\ast,\Sigma}(\Omega)$. Then $\operatorname*{supp}f\subseteq \Sigma$ if and only if there is an upper locally finite two-sided sequence of distributions $\{g_j\}_{j=-\infty}^{\infty}$ of $\mathcal{D}'(\Sigma\times \mathbb{S}^{d-1})$ such that $f=\sum_{j\in\mathbb{Z}} g_{j}\delta^{[j]}_{\ast,\Sigma}$. Furthermore, $f\in\operatorname*{ker}\Pi$ if and only if $\langle g_{j}(\xi,\omega), \omega^{\alpha}  \rangle=0$,  $ \forall j\geq 0,$$\forall \alpha\in\mathbb{N}^{d}$ with $|\alpha|=j$.
\end{proposition}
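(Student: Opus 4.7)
My plan is to read the claim off Remark \ref{trmproj}, using the orientability hypothesis of this subsection to convert its local data $\{f_{\nu,j}\}$ into genuinely global distributions $g_j$ on $(\Omega\cap\Sigma)\times\mathbb{S}^{d-1}$. Crucially, orientability makes the tube coordinate $\omega_{x}$ independent of any local choice via the fixed frame $\{\mathbf{n}_{1},\dots,\mathbf{n}_{d}\}$, so each local coefficient $a_{j}^{\phi,\nu}$ in \eqref{teq1} is really the restriction of a single globally defined $a_{j}^{\phi}\in C^{\infty}(\Sigma\times\mathbb{S}^{d-1})$, and the pairing on the right-hand side of \eqref{TeqSupSigma} can be rewritten in terms of globally defined objects.

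For the forward implication I assume $\operatorname*{supp}f\subseteq\Sigma$, extract the local distributions $f_{\nu,j}\in\mathcal{D}'((\Omega\cap\Sigma\cap U_{\nu})\times\mathbb{S}^{d-1})$ from Remark \ref{trmproj}, fix a smooth partition of unity $\{\chi_{\nu}\}$ on $\Omega\cap\Sigma$ subordinate to $\{\Omega\cap\Sigma\cap U_{\nu}\}$, and set
\[
g_{j}=|\mathbb{S}^{d-1}|\sum_{\nu}(\chi_{\nu}\otimes 1)\,f_{\nu,j}\in\mathcal{D}'((\Omega\cap\Sigma)\times\mathbb{S}^{d-1}),
\]
each summand extended by zero outside its natural domain and the overall sum being a well-defined distribution by local finiteness of $\{\chi_\nu\}$. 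Plugging back into \eqref{TeqSupSigma} and identifying the $\nu$-sum in each $j$-slot with $\langle g_{j}\delta^{[j]}_{\ast,\Sigma},\phi\rangle$ gives $f=\sum_{j\in\mathbb{Z}}g_{j}\delta^{[j]}_{\ast,\Sigma}$. The locally upper finite property is verified as follows: given a relatively compact open $W\subset\Omega$, any $\phi\in\mathcal{D}_{\ast,\Sigma}(\Omega)$ supported in $W$ lies in $\mathcal{D}^{[m]}_{\ast,\Sigma}(\Omega)$ for a common $m$; combining the bounds $J_{m,\nu}$ with the finiteness of the set $\{\nu:\chi_{\nu}\not\equiv 0\text{ on }\overline{W}\}$ yields a single $J$ after which the $g_{j}$ vanish on $W\cap\Sigma$. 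Conversely, if $f$ has such a representation, then for each compactly supported $\phi$ only finitely many terms contribute, making $f$ well defined; and if $\phi$ vanishes on a neighborhood of $\Sigma$ all $a_{j}^{\phi}$ are zero, so $\langle f,\phi\rangle=0$, proving $\operatorname*{supp}f\subseteq\Sigma$.

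For the kernel characterisation, recall from the proof of Theorem \ref{tth1} that any $\phi\in\mathcal{D}(\Omega)\subset\mathcal{D}_{\ast,\Sigma}(\Omega)$ has expansion \eqref{teqExpD}, so that $a_{j}^{\phi}(\xi,\omega)=\sum_{|\alpha|=j}c_{\alpha}(\xi)\omega^{\alpha}$ for $j\geq 0$, with $c_{\alpha}\in\mathcal{D}(\Omega\cap\Sigma)$ freely prescribable as $\phi$ varies. Evaluating $f=\sum_{j}g_{j}\delta^{[j]}_{\ast,\Sigma}$ on such $\phi$ yields
\[
\langle\Pi(f),\phi\rangle=\frac{1}{|\mathbb{S}^{d-1}|}\sum_{j\geq 0}\sum_{|\alpha|=j}\langle g_{j}(\xi,\omega),c_{\alpha}(\xi)\omega^{\alpha}\rangle,
\]
a finite sum by the locally upper finite property, so $\Pi(f)=0$ precisely when the partial pairings $\langle g_{j}(\cdot,\omega),\omega^{\alpha}\rangle_{\omega}$ vanish in $\mathcal{D}'(\Omega\cap\Sigma)$ for every $j\geq 0$ and every $\alpha\in\mathbb{N}^{d}$ with $|\alpha|=j$. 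The main obstacle I anticipate is the gluing step itself: one has to verify that the $g_{j}$ assembled via the partition of unity do define a single distribution on $(\Omega\cap\Sigma)\times\mathbb{S}^{d-1}$ whose partial pairings against $\omega^{\alpha}$ are well-defined and faithfully reproduce the local vanishing conditions stated in Remark \ref{trmproj}, independently of the auxiliary choice of $\{\chi_{\nu}\}$.
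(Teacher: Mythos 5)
Your argument is correct and matches the route the paper is (silently) taking: the paper offers no explicit proof, only the phrase ``Remark \ref{trmproj} can be rephrased in terms of $\Sigma$-thick deltas,'' and what you have done is exactly the bookkeeping that phrase is asking for. The two ingredients you identify — orientability making the coefficients $a_j^{\phi}$ globally defined on $\Sigma\times\mathbb{S}^{d-1}$ so that the local $a_j^{\phi,\nu}$ in \eqref{TeqSupSigma} are restrictions of one object, and the partition-of-unity assembly $g_j=|\mathbb{S}^{d-1}|\sum_\nu(\chi_\nu\otimes 1)f_{\nu,j}$ turning \eqref{TeqSupSigma} directly into $\langle f,\phi\rangle=\sum_j\frac{1}{|\mathbb{S}^{d-1}|}\langle g_j,a_j^\phi\rangle=\sum_j\langle g_j\delta^{[j]}_{\ast,\Sigma},\phi\rangle$ — are the whole content. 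Your verification of local upper finiteness, of the converse direction, and of the kernel condition via \eqref{teqExpD} with freely prescribable Whitney data $c_\alpha$ on $\Sigma$ are all sound.

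The one thing to note is that the ``obstacle'' you flag at the end is not actually an obstacle in your own proof. You never need the global vanishing condition $\langle g_j,\omega^\alpha\rangle=0$ to be \emph{derived} from the local conditions on the $f_{\nu,j}$ or to be independent of $\{\chi_\nu\}$: once $g_j$ is built (its well-definedness is immediate from local finiteness of the partition of unity, and the partial pairing $\langle g_j(\cdot,\omega),\omega^\alpha\rangle\in\mathcal{D}'(\Omega\cap\Sigma)$ is the standard fibrewise pairing with a smooth function of $\omega$), you test $\Pi(f)$ directly against $\phi\in\mathcal{D}(\Omega)$ using \eqref{teqExpD} and read off the equivalence. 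That self-contained computation makes the comparison with the local data of Remark \ref{trmproj} unnecessary, so you can delete the worry and present the argument with full confidence.
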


Observe that $
\Pi(  g\delta_{\ast,\Sigma}^{\left[  j\right]  })
=0$  whenever $j<0$. We already treated the case where $j=0$. The projections of the $\Sigma$-thick deltas for $j>0$ are calculated in the following proposition, which follows at once from \eqref{Teq3.3}, but we first need to introduce some useful notation in order to state our result. Let $\varphi\in C^{\infty}(\Omega)$ and set $\Psi(\xi,y)=\xi + \sum_{k=1}^{d} y_k \mathbf{n}_{k}^{\nu}(\xi)$. If $\alpha \in \mathbb{N}^{d}$, we define the derivative $D_{\mathbf{n}}^{\alpha}\varphi \in C^{\infty}(\Sigma\cap \Omega)$ with respect to the normal frame as
\[
D_{\mathbf{n}}^{\alpha}\varphi= \left.\frac{\partial^{\alpha}(\varphi\circ \Psi_{\nu})}{\partial y^{\alpha}} \right|_{(\Sigma\cap \Omega) \times \{0\}}\]
and extend it to single layers $h\delta_{\Sigma}\in \mathcal{D}'(\Omega)$ as
$$
\langle \bar{D}_{\mathbf{n}}^{\alpha}(h\delta{\Sigma}), \varphi \rangle = (-1)^{|\alpha|} \langle h, D_{\mathbf{n}}^{\alpha}\varphi \rangle .
$$
\begin{remark}
\label{rm coefficients b} With this notation, the functions $b_{l,i,q}$ from \eqref{eqdef b coefficient} become
\begin{equation}
\label{other formula b}
b_{l,i,q}(\xi,\omega)= \sum_{|\alpha|=q}\frac{\omega^{\alpha}}{\alpha !} D_{\mathbf{n}}^{\alpha}\left(\frac{\partial \xi_{l}}{\partial x_i}\right) .
\end{equation}
\end{remark}
\begin{proposition}
\label{Proposition Projection}If  $g\in\mathcal{D}'(
(\Sigma\cap \Omega)\times \mathbb{S}^{d-1}) $ and $j\geq0$, then
\begin{equation*}
\Pi (g\delta_{\ast,\Sigma}^{[ j]})
=\frac{\left(
-1\right)  ^{j}}
{|\mathbb{S}^{d-1}|}
\sum_{|\alpha|=j}\frac{\bar{D}_{\mathbf{n}}^{\alpha}(h_{\alpha}\delta
_{\Sigma})}{\alpha!},
\end{equation*}
where $h_{\alpha}(\xi)=\langle g(\xi,\omega),\omega^{\alpha} \rangle\in \mathcal{D}'(\Sigma)$.
\end{proposition}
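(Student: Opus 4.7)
The plan is to unwind both sides on a test function $\varphi\in\mathcal{D}(\Omega)$ and read off the coefficient $a_j^{\varphi,\nu}$ of its asymptotic expansion directly from equation \eqref{Teq3.3}, which was established in the proof of Theorem \ref{tth1}. By definition of $\Pi$ as the transpose of the inclusion $\iota:\mathcal{D}(\Omega)\hookrightarrow\mathcal{D}_{\ast,\Sigma}(\Omega)$ and by the definition of $g\delta_{\ast,\Sigma}^{[j]}$, we have
\[
\langle \Pi(g\delta_{\ast,\Sigma}^{[j]}),\varphi\rangle= \langle g\delta_{\ast,\Sigma}^{[j]},\iota(\varphi)\rangle= \frac{1}{|\mathbb{S}^{d-1}|}\langle g(\xi,\omega), a_j^{\varphi}(\xi,\omega)\rangle.
\]

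Next, by \eqref{Teq3.3}, when $\varphi\in\mathcal{D}(\Omega)$ its strong asymptotic expansion on $U_\nu\setminus\Sigma$ has $j$-th coefficient
\[
a_j^{\varphi}(\xi,\omega)=\sum_{|\alpha|=j}\frac{D_{\mathbf{n}}^{\alpha}\varphi(\xi)}{\alpha!}\,\omega^{\alpha},
\]
since in the orientable setting with a globally fixed frame $\{\mathbf{n}_1,\dots,\mathbf{n}_d\}$ the expansion holds globally on $\Sigma^{\epsilon}$. Substituting, I then use the linearity and the tensorial structure of the pairing: since $D_{\mathbf{n}}^{\alpha}\varphi$ is a smooth function of $\xi$ alone, the definition $h_\alpha(\xi)=\langle g(\xi,\omega),\omega^{\alpha}\rangle$ (which produces a well-defined element of $\mathcal{D}'(\Sigma\cap\Omega)$ via the partial action of $g$ in the $\omega$ variable against the test function $\omega^{\alpha}\in C^{\infty}(\mathbb{S}^{d-1})$) gives
\[
\langle g(\xi,\omega),\varphi|_{\Sigma}(\xi)\,\omega^{\alpha}\rangle \;\text{``is'' replaced by}\; \langle h_\alpha, D_{\mathbf{n}}^{\alpha}\varphi\rangle,
\]
so that
\[
\langle \Pi(g\delta_{\ast,\Sigma}^{[j]}),\varphi\rangle=\frac{1}{|\mathbb{S}^{d-1}|}\sum_{|\alpha|=j}\frac{1}{\alpha!}\langle h_\alpha, D_{\mathbf{n}}^{\alpha}\varphi\rangle.
\]

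Finally, I apply the definition of $\bar{D}_{\mathbf{n}}^{\alpha}(h_\alpha\delta_\Sigma)$ given just before the proposition, namely
\[
\langle \bar{D}_{\mathbf{n}}^{\alpha}(h_\alpha\delta_\Sigma),\varphi\rangle = (-1)^{|\alpha|}\langle h_\alpha, D_{\mathbf{n}}^{\alpha}\varphi\rangle,
\]
and note $|\alpha|=j$. Substituting yields the claimed formula
\[
\Pi(g\delta_{\ast,\Sigma}^{[j]})=\frac{(-1)^{j}}{|\mathbb{S}^{d-1}|}\sum_{|\alpha|=j}\frac{\bar{D}_{\mathbf{n}}^{\alpha}(h_\alpha\delta_\Sigma)}{\alpha!}.
\]

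The only mildly delicate point is the step identifying $\langle g(\xi,\omega), u(\xi)\,\omega^{\alpha}\rangle$ with $\langle h_\alpha, u\rangle$ for a smooth $u$ on $\Sigma\cap\Omega$; this is the standard partial-action identity for distributions on a product of manifolds, but it needs to be invoked (and, strictly, combined with the global validity of \eqref{Teq3.3}, which uses orientability of $\Sigma$ to have the frame $\{\mathbf{n}_k\}$ defined globally). Apart from that, everything is bookkeeping with the formulas already in hand.
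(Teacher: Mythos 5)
Your proof is correct and is exactly the argument the paper has in mind (the paper merely remarks that the proposition \emph{follows at once from \eqref{Teq3.3}}): you substitute $a_j^{\varphi}(\xi,\omega)=\sum_{|\alpha|=j}D_{\mathbf{n}}^{\alpha}\varphi(\xi)\,\omega^{\alpha}/\alpha!$ into the defining pairing of $g\delta_{\ast,\Sigma}^{[j]}$, pass to $h_{\alpha}$ via the partial-action identity on the compact factor $\mathbb{S}^{d-1}$, and finish with the definition of $\bar{D}_{\mathbf{n}}^{\alpha}$. The only blemish is a typo in your displayed line, where the left-hand side should read $\langle g(\xi,\omega),D_{\mathbf{n}}^{\alpha}\varphi(\xi)\,\omega^{\alpha}\rangle$ rather than $\langle g(\xi,\omega),\varphi|_{\Sigma}(\xi)\,\omega^{\alpha}\rangle$; the surrounding prose makes the intent clear.
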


Proposition \ref{Proposition Projection} tells us that, using the terminology from \cite{EKBook,K04}, the $\Sigma$-thick delta functions are generalizations of the so-called multilayers on $\Sigma$.

We end this subsection mentioning an important relation between $\operatorname*{Pf}\left(  \rho^{\lambda}\right)  $ and the $\Sigma$-thick delta distributions. It directly follows from the formulas given in Example \ref{Example:r a la l}.

\begin{proposition}
The  $\Sigma$-thick distributions $\operatorname*{Pf}( \rho^{\lambda})  $ form an 
analytic family with respect to the parameter $\lambda$ in the region $\mathbb{C}\setminus\mathbb{Z}.$ Moreover,
$\operatorname*{Pf}( \rho^{\lambda})  $ has simple poles at all integers $k\in\mathbb{Z}$ with
residues
\begin{equation*}
\operatorname*{Res}_{\lambda=k}\operatorname*{Pf}( \rho^{\lambda})  =|\mathbb{S}^{d-1}| \ \delta_{\ast,\Sigma}^{[ -k-d] }.
\end{equation*}
In addition, the  $\Sigma$-thick distributions $\operatorname*{Pf}\left(  \rho^{k}\right)  $ satisfy 
\[
\operatorname*{Pf}\left(  \rho^{k}\right)  =\lim_{\lambda\rightarrow
\mathsf{k}}\left( \operatorname*{Pf}\left(  \rho^{\lambda}\right)  -\frac
{|\mathbb{S}^{d-1}| \ \delta_{\ast,\Sigma}^{[ -k-d]  }}
{\lambda-k}\right)  .
\]
\end{proposition}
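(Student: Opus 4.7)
The plan is to read off all three assertions directly from the two explicit representation formulas for $\langle \operatorname*{Pf}(\rho^{\lambda}),\phi\rangle$ derived in Example \ref{Example:r a la l}. I reduce by a smooth partition of unity to test functions $\phi$ supported in a single $U_{\nu}$, fix $\eta > 0$ with $\Sigma^{\eta}\cap\operatorname*{supp}\phi \subset \Sigma^{\epsilon}$, and set $A_{j}(\phi) := \iint_{\Sigma\times \mathbb{S}^{d-1}} a_{j}^{\phi,\nu}(\xi,\omega)\, \mathrm{d}\sigma$. The non-integer formula of Example \ref{Example:r a la l} then reads
\[
\langle \operatorname*{Pf}(\rho^{\lambda}),\phi\rangle = I_{1}(\lambda) + I_{2}(\lambda) + \sum_{m_0 \leq j \leq -\Re e\:\lambda - d} \frac{\eta^{\lambda+j+d}}{\lambda+j+d}\, A_{j}(\phi),
\]
where $m_{0}$ is the starting index of the expansion \eqref{teq1} and, for a fixed truncation index, both $I_{1}(\lambda)$ and $I_{2}(\lambda)$ are entire in $\lambda$.

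Next I establish meromorphy in $\lambda$. On each open strip $\{-N-d < \Re e\:\lambda < -N-d+1\}$ the formula is manifestly meromorphic with simple poles only at the integers $\{-j-d : m_{0} \leq j \leq N\}$. As $\Re e\:\lambda$ crosses the value $-N-d$, a new summand with $j = N$ joins the sum; adding this summand on the right and compensating by extending the subtraction inside the integrand of $I_{2}$ preserves the identity (elementary integration of $\rho^{\lambda + N + d - 1}$ from $0$ to $\eta$), so the meromorphic extensions from adjacent strips agree. Hence $\operatorname*{Pf}(\rho^{\lambda})$ is analytic on $\mathbb{C}\setminus\mathbb{Z}$, and at $\lambda = k \in\mathbb{Z}$ only the summand $j = -k-d$ is singular, with residue
\[
\operatorname*{Res}_{\lambda=k} \frac{\eta^{\lambda-k}}{\lambda-k} A_{-k-d}(\phi) = A_{-k-d}(\phi) = |\mathbb{S}^{d-1}|\,\langle \delta_{\ast,\Sigma}^{[-k-d]},\phi\rangle
\]
by the very definition of $\delta_{\ast,\Sigma}^{[-k-d]}$, yielding the asserted residue.

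For the regularization identity I subtract $|\mathbb{S}^{d-1}|\delta_{\ast,\Sigma}^{[-k-d]}/(\lambda-k)$ from the non-integer formula (with the truncation chosen so that $j = -k-d$ is present), which replaces the singular summand by $\frac{\eta^{\lambda-k} - 1}{\lambda-k} A_{-k-d}(\phi) \to (\ln\eta)\,A_{-k-d}(\phi)$ as $\lambda \to k$. The remaining terms depend continuously on $\lambda$ at $\lambda = k$, and their limits reproduce term by term the integer formula of Example \ref{Example:r a la l} for $\langle \operatorname*{Pf}(\rho^{k}),\phi\rangle$. The main obstacle is technical rather than conceptual: upgrading this pointwise-in-$\phi$ analyticity to analyticity of $\lambda \mapsto \operatorname*{Pf}(\rho^{\lambda})$ as a map into the strong dual $\mathcal{D}'_{\ast,\Sigma}(\Omega)$. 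This requires the estimates for $I_{1}(\lambda)$, $I_{2}(\lambda)$ and $A_{j}(\phi)$ to be uniform on bounded subsets of each Fr\'echet space $\mathcal{D}_{\ast,\Sigma}^{[m;K]}(\Omega)$, which is immediate from the definition of the seminorms $\|\cdot\|_{q,k,\nu,K,m}$ controlling the remainders of the asymptotic expansions.
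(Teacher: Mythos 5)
Your proof is correct and takes essentially the same approach the paper intends: the paper states this proposition ``directly follows from the formulas given in Example \ref{Example:r a la l}'' without further elaboration, and you have filled in precisely that derivation (analytic continuation across strips by adjusting the truncation index and compensating in the remainder integral, identification of the single singular summand $j=-k-d$ to read off the residue via the definition of $\delta_{\ast,\Sigma}^{[-k-d]}$, and the regularization identity via $(\eta^{\lambda-k}-1)/(\lambda-k)\to\ln\eta$). Your closing remark about upgrading pointwise analyticity to analyticity into the strong dual via uniformity on bounded sets of each $\mathcal{D}^{[m;K]}_{\ast,\Sigma}$ is the right way to close the functional-analytic gap the paper leaves tacit.
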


\subsection{Partial derivatives of some $\Sigma$-thick distributions}
\label{T derivatives subsection} We again assume that $\Sigma$ is orientable. We compute here the partial derivatives of some thick  distributions along $\Sigma$. 

We noticed in the proof of Proposition \ref{tp1} that the partial derivatives of $\rho$ for $x \in \Sigma^{\epsilon}$ do not depend on $\rho_x=\|x-\pi(x)\|$ itself, but only on $(\xi_{x},\omega_{x})$. In fact, the following lemma was already shown there. We write $\mathbf{n}_k=(n_{k,1},...,n_{k,n})$.

	\begin{lemma}\label{lcderivativesrho}
		For $x \in \Sigma^{\epsilon}$, 
		\begin{equation}\label{Equation1}
			\frac{\partial \rho}{\partial x_i}= \sum\limits_{k=1}^{d}\omega_{x,k} \cdot n_{k,i}(\pi(x)).
		\end{equation}
	\end{lemma}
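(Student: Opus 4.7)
The plan is to obtain the formula by straightforward differentiation, exploiting the fact that $\rho(x) = \|x-\pi(x)\|$ and that $x - \pi(x)$ lies in the normal space $N_{\pi(x)}(\Sigma)$. In fact, this identity was essentially established en route to Proposition \ref{tp1}, so I would just reorganize that computation as a standalone argument.

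First, I would apply the chain rule to $\rho(x) = \|x-\pi(x)\|$. Since $\nabla (\|\cdot\|)$ at a nonzero vector $v$ is $v/\|v\|$, this gives
\[
(\nabla \rho|_x)^{\top} = \frac{x-\pi(x)}{\|x-\pi(x)\|} - d\pi_x^{\top}\!\left(\frac{x-\pi(x)}{\|x-\pi(x)\|}\right).
\]
Next, I would invoke Lemma \ref{Tlemmaprojection}: because $x-\pi(x) \in N_{\pi(x)}(\Sigma)$, the transposed differential $d\pi_x^{\top}$ annihilates this vector, and the second term drops out.

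Finally, I would use the tube coordinate representation from Lemma \ref{lemma1}(iii), which gives $x - \pi(x) = \rho_x \sum_{k=1}^{d} \omega_{x,k}\, \mathbf{n}_k(\pi(x))$, so that after dividing by $\|x-\pi(x)\|=\rho_x$ we obtain
\[
\nabla \rho|_x = \sum_{k=1}^{d} \omega_{x,k}\, \mathbf{n}_k(\pi(x)).
\]
Reading off the $i$-th component yields \eqref{Equation1}.

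There is no real obstacle here; the only subtlety worth flagging is the use of the fact that $\pi$ is the identity on $\Sigma$ combined with the orthogonal-projection description of $d\pi_\xi$, which is precisely what Lemma \ref{Tlemmaprojection} provides. The computation simultaneously shows the assertion used in the proof of Proposition \ref{tp1} that $\nabla \rho|_x$ depends only on $(\xi_x, \omega_x)$ and not on $\rho_x$ itself.
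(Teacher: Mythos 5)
Your proof is correct and is essentially identical to the paper's argument, which appears verbatim in the proof of Proposition \ref{tp1} and is simply cited at Lemma \ref{lcderivativesrho}: both apply the chain rule to $\rho(x)=\|x-\pi(x)\|$, use Lemma \ref{Tlemmaprojection} to kill the $d\pi_x^{\top}$ term, and read off the result from the tube coordinates.
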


	We define
	\begin{equation}
 \label{thetaieq}
		\vartheta _i (\xi,\omega)=\sum_{k=1}^d \omega_{k}n_{k,i}(\xi), \qquad (\xi, \omega)\in \Sigma \times \mathbb{S}^{d-1},
	\end{equation}
	so that  \eqref{Equation1} reads $\partial \rho/\partial x_i=\vartheta _i (\xi_{x},\omega_{x}),$ for any $ x \in \Sigma ^{\epsilon}$.
	We have $\partial \rho/\partial x_i \in \mathcal{E}_{\ast,\Sigma}(\mathbb{R}^n)$ is a smooth function on $\mathbb{R}^n \setminus \Sigma$, while $\vartheta_i \in C^{\infty} (\Sigma \times \mathbb{S}^{d-1})$.
	
	\begin{theorem} \label{theo}
		The partial derivatives of $\operatorname*{Pf}(\rho^{\lambda})$ are given by 
		\begin{equation*}
			\frac{\partial_* \operatorname*{Pf}(\rho^{\lambda})}{\partial x_i}=\lambda \frac{\partial \rho}{\partial x_i} \operatorname*{Pf}(\rho^{\lambda-1}) \qquad \text{  if  } \lambda \notin \mathbb{Z},
		\end{equation*}
		and 
		\begin{equation*}
			\frac{\partial_{*} \operatorname*{Pf}(\rho^{k})}{\partial x_i}=k \frac{\partial \rho}{\partial x_i} \operatorname*{Pf}(\rho^{k-1}) + |\mathbb{S}^{d-1} |\vartheta _i \delta_{\ast,\Sigma}^{[1-d-k]}  \qquad \text{  if  } k \in \mathbb{Z}.
		\end{equation*}
	\end{theorem}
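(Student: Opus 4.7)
The plan is to compute $\langle \partial_{\ast} \operatorname*{Pf}(\rho^\lambda)/\partial x_i, \phi \rangle$ directly from the definition of the thick distributional derivative together with the finite-part representation of $\operatorname*{Pf}(\rho^\lambda)$ provided by Definition \ref{Definition:FinitePart}. Starting from
\[
\Big\langle \frac{\partial_{\ast}\operatorname*{Pf}(\rho^\lambda)}{\partial x_i}, \phi \Big\rangle = -\operatorname{F.p.}\lim_{\varepsilon \to 0^+} \int_{\Omega \setminus \Sigma^\varepsilon} \rho^\lambda \frac{\partial \phi}{\partial x_i}\, \mathrm{d}x,
\]
I would integrate by parts on the smooth domain $\Omega \setminus \Sigma^{\varepsilon}$. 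A short computation using Lemma \ref{lcderivativesrho} and the orthonormality of the frame $\{\mathbf{n}_k\}$ gives $|\nabla \rho|=1$ on $\Sigma^{\epsilon}\setminus\Sigma$, so the outward unit normal to $\Omega \setminus \Sigma^{\varepsilon}$ along the inner boundary $\partial \Sigma^{\varepsilon}$ is $-\nabla \rho$, whose $i$-th component is $-\vartheta_i(\xi_x, \omega_x)$. Applying the divergence theorem to $\rho^\lambda \phi$ on $\Omega \setminus \Sigma^\varepsilon$ then yields
\[
\int_{\Omega \setminus \Sigma^\varepsilon} \rho^\lambda \frac{\partial \phi}{\partial x_i}\, \mathrm{d}x = -\lambda \int_{\Omega \setminus \Sigma^\varepsilon} \rho^{\lambda-1}\, \vartheta_i(\xi_x, \omega_x)\, \phi\, \mathrm{d}x - \int_{\partial \Sigma^\varepsilon} \rho^\lambda\, \phi\, \vartheta_i\, \mathrm{d}S,
\]
reducing the problem to computing the finite part of a bulk integral and of a surface integral separately.

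Taking F.p.\ term by term, the bulk contribution becomes $\lambda \langle \vartheta_i \operatorname*{Pf}(\rho^{\lambda-1}), \phi \rangle$ by the very definition of $\operatorname*{Pf}(\rho^{\lambda-1})$, noting that $\vartheta_i \phi \in \mathcal{D}_{\ast,\Sigma}(\Omega)$ because $\vartheta_i=\partial \rho/\partial x_i$ lies in the multiplier space $\mathcal{E}_{\ast,\Sigma}$. For the boundary term I would reduce to the case $\operatorname{supp}\phi \subset U_\nu$ by a partition of unity, parametrize $\partial \Sigma^{\varepsilon}\cap U_\nu$ through $(\xi,\omega) \mapsto \xi + \varepsilon \sum_{k} \omega_k \mathbf{n}_k(\xi)$, and plug in the strong asymptotic expansion \eqref{teq1} of $\phi$ to obtain, consistently with the tubular conventions used in Example \ref{Example:r a la l},
\[
\int_{\partial \Sigma^\varepsilon} \rho^\lambda\, \phi\, \vartheta_i\, \mathrm{d}S \;\sim\; \sum_{j=m}^\infty \varepsilon^{\lambda + j + d - 1} \iint_{\Sigma \times \mathbb{S}^{d-1}} a_j(\xi,\omega)\, \vartheta_i(\xi,\omega)\, \mathrm{d}\sigma(\xi)\, \mathrm{d}\sigma(\omega) \qquad \text{as } \varepsilon \to 0^+.
\]

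If $\lambda \notin \mathbb{Z}$ then no integer $j$ makes the exponent $\lambda+j+d-1$ vanish, the asymptotic expansion of the boundary integral carries no $\varepsilon^0$ term, and its finite part is zero; assembling the two pieces produces the first identity. If $\lambda = k \in \mathbb{Z}$, the coefficient of $\varepsilon^0$ appears precisely when $j = 1-d-k$, and by the definition of the $\Sigma$-thick multilayer $\vartheta_i \delta_{\ast,\Sigma}^{[1-d-k]}$ this coefficient equals $|\mathbb{S}^{d-1}|\, \langle \vartheta_i\, \delta_{\ast,\Sigma}^{[1-d-k]},\phi\rangle$, yielding the second identity. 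The principal obstacle I anticipate lies in the rigorous handling of this boundary expansion: one must justify termwise integration of the asymptotic series, which is exactly where the \emph{strong} nature of the expansion of $\phi$ becomes indispensable (it supplies uniform estimates on the shrinking hypersurfaces $\partial \Sigma^{\varepsilon}$), and one must verify that the tubular surface measure couples with the coefficients $a_j$ precisely so as to reproduce the bracket appearing in the definition of $\delta_{\ast,\Sigma}^{[1-d-k]}$, in parallel with the computations already carried out for $\operatorname*{Pf}(\rho^k)$ in Example \ref{Example:r a la l}.
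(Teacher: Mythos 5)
Your proposal follows the paper's proof essentially step for step: it starts from the finite-part representation of $\operatorname*{Pf}(\rho^\lambda)$, applies the divergence theorem on $\Omega\setminus\Sigma^\varepsilon$ to split into a bulk term (yielding $\lambda\, (\partial\rho/\partial x_i)\operatorname*{Pf}(\rho^{\lambda-1})$) and a boundary integral over $\partial\Sigma^\varepsilon$, and then extracts the finite part of the latter from the strong asymptotic expansion of $\phi$, with the $\varepsilon^0$ coefficient appearing only at integer $\lambda=k$ and matching $|\mathbb{S}^{d-1}|\,\vartheta_i\delta_{\ast,\Sigma}^{[1-d-k]}$ by the definition of the thick multilayer. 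The explicit remarks on $|\nabla\rho|=1$, the orientation of the inner normal, and the role of strong expansions in justifying termwise passage to the boundary series are correct elaborations of points the paper treats more tersely, but the argument is the same.
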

	\begin{proof}
		Let $\phi \in \mathcal{D}_{*, \Sigma}(\Omega)$ be such that $\phi$ vanishes on $\Omega \setminus K $ with $ K $ compact. Then there is $\varepsilon$ such that $ K \cap \Sigma ^{\varepsilon} \subseteq K \cap \Sigma ^{\epsilon}$. We point out that $\nabla \rho$ is the outer unit normal of hypersurface $\partial\Sigma ^{\varepsilon} =\left\{y \in \Omega : \rho(y)=\varepsilon\right\}$. Therefore, Stoke's theorem yields
			\begin{align*}
				-\left \langle \operatorname*{Pf}(\rho^{\lambda}), \frac{\partial \phi}{\partial x_i} \right \rangle
				&=-\mathrm{F.p.} \lim_{\varepsilon \rightarrow 0^+} \int_{\mathbb{R}^n \setminus \Sigma ^{\varepsilon}} \rho^{\lambda} (x) \frac{\partial \phi}{\partial x_i} \mathrm{d}x\\ 
				&= \lambda \mathrm{F.p.} \int_{\mathbb{R}^n } \rho^{\lambda -1} (x) \frac{\partial \rho}{\partial x_i} \phi(x) \mathrm{d}x 
    \\
    &
    \qquad \qquad \qquad  +\mathrm{F.p.}\lim_{\varepsilon \rightarrow 0^+} \int_{\partial \Sigma ^{\varepsilon}} \rho^{\lambda} (y) \left.\frac{\partial \rho}{\partial x_i}\right|_{y} \phi(y) \mathrm{d} \sigma_{\partial \Sigma ^{\varepsilon}}(y)\\
				&=\left \langle \lambda \frac{\partial \rho}{\partial x_i} \operatorname*{Pf}(\rho ^{\lambda -1}), \phi \right \rangle + \mathrm{F.p.}\lim_{\varepsilon \rightarrow 0^+} \varepsilon^{\lambda}\int_{\partial \Sigma ^{\varepsilon}} \left.\frac{\partial \rho}{\partial x_i}\right|_{y} \phi(y) \mathrm{d} \sigma_{\partial \Sigma ^{\varepsilon}}(y).
			\end{align*}
		Now, if $\phi(x) \sim \sum_{j=m}^{\infty}a_j(\xi_{x}, \omega_{x})(\rho(x))^{j}$ as $\rho(x) \rightarrow 0$, we have
		\[
			\varepsilon ^{\lambda} \int_{\partial \Sigma ^{\varepsilon}} \left.\frac{\partial \rho}{\partial x_i}\right|_{y} \phi(y) \mathrm{d} \sigma_{\partial \Sigma ^{\varepsilon}}(y) \sim \sum_{j=m}^{\infty} \varepsilon ^{\lambda +j+d-1} \int \int_{\Sigma \times \mathbb{S}^{d-1}} \vartheta _i(\xi, \omega)a_j(\xi, \omega) \mathrm{d} \sigma_{\Sigma}(\xi) \mathrm{d}\sigma_{\mathbb{S}^{n-1}}(\omega),
		\]
		and hence, we obtain the result.
	\end{proof}
	As an application of Theorem \ref{theo}, we can compute the partial derivatives of $\operatorname*{Pf}(\psi)$ where $\psi \in \mathcal{E}_{*,\Sigma}(\Omega)$.
	\begin{corollary}\label{cor der test}
		If $\psi \in \mathcal{E}_{*,\Sigma}(\Omega)$, then
		\begin{equation}\nonumber
			\frac{\partial_{*} \left(\operatorname*{Pf}(\psi)\right)}{\partial x_i}=\operatorname*{Pf}\left(\frac{\partial \psi}{\partial x_i}\right) + |\mathbb{S}^{d-1}|\psi \cdot \vartheta_i \delta_{\ast,\Sigma}^{[1-d]}.
		\end{equation}
	\end{corollary}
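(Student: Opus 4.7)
The proof proposal rests on combining three tools already available in the paper: the representation $\operatorname*{Pf}(\psi)=\psi\cdot \operatorname*{Pf}(1)$ from the proof of Corollary \ref{Tcor1}, the Leibniz rule of Proposition \ref{td proposition differentiation}, and the partial derivative formula for $\operatorname*{Pf}(\rho^k)$ at $k=0$ from Theorem \ref{theo}. So the plan is to rewrite the left-hand side in a form where these results apply directly and then collect the pieces.

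First, I would note that $\partial\psi/\partial x_i \in \mathcal{E}_{\ast,\Sigma}(\Omega)$, which is needed to make sense of $\operatorname*{Pf}(\partial\psi/\partial x_i)$ on the right-hand side. This is a short check: for any $\varphi \in \mathcal{D}(\Omega)$, write $\varphi \cdot (\partial\psi/\partial x_i) = \partial(\varphi\psi)/\partial x_i - (\partial\varphi/\partial x_i)\psi$; the first term lies in $\mathcal{D}_{\ast,\Sigma}(\Omega)$ by Proposition \ref{tp1} (since $\varphi\psi\in\mathcal{D}_{\ast,\Sigma}(\Omega)$), and the second because $\partial\varphi/\partial x_i \in\mathcal{D}(\Omega)$ and $\psi \in \mathcal{E}_{\ast,\Sigma}(\Omega)$.

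Next, by Corollary \ref{Tcor1}, $\operatorname*{Pf}(\psi)=\psi\cdot\operatorname*{Pf}(1)=\psi\cdot\operatorname*{Pf}(\rho^{0})$. Apply the Leibniz rule from Proposition \ref{td proposition differentiation} with $\alpha=e_i$:
\[
\frac{\partial_{\ast}(\psi \operatorname*{Pf}(\rho^{0}))}{\partial x_i} \;=\; \frac{\partial \psi}{\partial x_i}\cdot \operatorname*{Pf}(\rho^{0}) \;+\; \psi\cdot\frac{\partial_{\ast} \operatorname*{Pf}(\rho^{0})}{\partial x_i}.
\]
The first summand equals $\operatorname*{Pf}(\partial\psi/\partial x_i)$, using the same identity $\operatorname*{Pf}(\cdot)=(\cdot)\cdot\operatorname*{Pf}(1)$ applied to $\partial\psi/\partial x_i \in \mathcal{E}_{\ast,\Sigma}(\Omega)$. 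For the second summand, specialize Theorem \ref{theo} to $k=0$: the coefficient $k$ in front of $(\partial \rho/\partial x_i)\operatorname*{Pf}(\rho^{k-1})$ vanishes, leaving $\partial_{\ast}\operatorname*{Pf}(\rho^{0})/\partial x_i = |\mathbb{S}^{d-1}|\, \vartheta_i \, \delta_{\ast,\Sigma}^{[1-d]}$. Substituting yields the claimed formula.

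I do not foresee a genuine obstacle here, since the statement is essentially a direct corollary of Theorem \ref{theo} together with the Leibniz formula; the only mildly non-trivial point is verifying that $\partial\psi/\partial x_i$ remains a multiplier of $\mathcal{D}_{\ast,\Sigma}(\Omega)$ so that $\operatorname*{Pf}(\partial\psi/\partial x_i)$ is well-defined on the right-hand side.
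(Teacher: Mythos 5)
Your proof is correct and follows essentially the same route as the paper: factor $\operatorname*{Pf}(\psi)=\psi\operatorname*{Pf}(1)$, apply the Leibniz rule from Proposition \ref{td proposition differentiation}, and evaluate $\partial_{\ast}\operatorname*{Pf}(\rho^{0})/\partial x_i$ via Theorem \ref{theo} with $k=0$. The additional check that $\partial\psi/\partial x_i\in\mathcal{E}_{\ast,\Sigma}(\Omega)$ is a reasonable small detail to spell out, which the paper leaves implicit.
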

	\begin{proof}
		By Proposition \ref{td proposition differentiation} and Theorem \ref{theo},
			\begin{align*}
				\frac{\partial_{*} (\operatorname*{Pf}(\psi))}{\partial x_i}&=\frac{\partial_{*}}{\partial x_i} (\psi \operatorname*{Pf}(1))=\frac{\partial \psi}{\partial x_i} \operatorname*{Pf}(1)+|\mathbb{S}^{d-1}|\psi \cdot \vartheta_i \delta_{\ast,\Sigma}^{[1-d]}\\
				&=\operatorname*{Pf}\left(\frac{\partial \psi}{\partial x_i}\right)+|\mathbb{S}^{d-1}|\psi \cdot \vartheta_i \delta_{\ast,\Sigma}^{[1-d]}.
			\end{align*}
		
	\end{proof}

The partial derivatives of thick deltas and, more generally, of multilayers along $\Sigma$ can directly be computed from Corollary \ref{tcfpartialdefivatives} and Lemma \ref{lcderivativesrho}. We only state this in the case of first order partial derivatives, but of course higher order ones could be found by repeated application of Proposition \ref{pcalderdelta}.

\begin{proposition}
\label{pcalderdelta}
Let $g \in\mathcal{D}'((\Omega\cap \Sigma)\times \mathbb{S}^{d-1}) $ and $j\in\mathbb{Z}$. The partial derivative of the  $\Sigma$-thick distribution $g\delta_{\ast,\Sigma}^{[ j] } \in \mathcal{D}'_{\ast, \Sigma}(\Omega)$ with respect to the variable $x_i$ is
\begin{equation*}
\left\langle \frac{\partial} {\partial x_i}\left(g\delta_{\ast,\Sigma}^{[ j]  }\right),\phi\right\rangle
=- \frac
{1}{|\mathbb{S}^{d-1}|}\left\langle g (\xi,\omega)  ,a_{j,i}(\xi,\omega)  \right\rangle,
\end{equation*}
where and smooth function $a_{i,j}$ on $\Sigma\times \mathbb{S}^{d-1}$ is explicitly given by
\begin{align}\label{eqderfordelta}
a_{j,i}&= (j+1) a_{j+1}\vartheta_{i} +  \frac{\delta a_{j}}{\delta \xi_{i} } + \sum_{k=m}^{j-1} \sum_{l=1}^{n} b_{l,i,j-k} \frac{\delta a_k}{\delta \xi_l}
\\
& \notag
\qquad \qquad +\sum_{k=1}^{d}  \left(\frac{\delta a_{j+1}}{\delta \omega_{k} }(n_{k,i}- {\omega_{k}}\vartheta_{i})
+\frac{\delta a_{j}}{\delta \omega_{k} }\sum_{l=1}^{d}\omega_{l} \mathbf{n}_{l} \cdot \frac{\delta \mathbf{n}_{k}}{\delta x_{i}}
\right)  \: ,
\end{align}
where the functions $b_{l,i,q}$ are given by \eqref{other formula b} and $\phi$ has asymptotic expansion \eqref{teq1}.

\end{proposition}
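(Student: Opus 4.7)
The plan is to reduce the computation to the definition of the distributional partial derivative and then read off the coefficient $a_{j,i}$ from the strong asymptotic expansion of $\partial\phi/\partial x_i$ using Corollary \ref{tcfpartialdefivatives}. By the very definition of $\partial_\ast/\partial x_i$ on $\mathcal{D}'_{\ast,\Sigma}(\Omega)$ and of $g\delta_{\ast,\Sigma}^{[j]}$,
\[
\left\langle \frac{\partial_\ast}{\partial x_i}\bigl(g\delta_{\ast,\Sigma}^{[j]}\bigr),\phi\right\rangle = -\left\langle g\delta_{\ast,\Sigma}^{[j]},\frac{\partial \phi}{\partial x_i}\right\rangle = -\frac{1}{|\mathbb{S}^{d-1}|}\bigl\langle g(\xi,\omega),a_{j,i}(\xi,\omega)\bigr\rangle,
\]
where $a_{j,i}$ is, by notational convention, the coefficient of $\rho_x^{j}$ in the strong asymptotic expansion of $\partial\phi/\partial x_i$. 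So the whole statement reduces to identifying this coefficient explicitly in terms of the coefficients $a_k$ of $\phi$.

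To do this, I would write $\phi(x)\sim\sum_{k=m}^{\infty}\rho_x^{k}a_k(\xi_x,\omega_x)$ and differentiate termwise, which is legitimate because the expansion is strong. For each fixed $k$, Corollary \ref{tcfpartialdefivatives} supplies the full asymptotic expansion of $\partial(\rho_x^{k}a_k)/\partial x_i$ as a sum with a $\rho_x^{k-1}$ contribution, a $\rho_x^{k}$ contribution, and tail contributions $\rho_x^{q}$ for $q\geq k+1$ involving the functions $b_{l,i,q-k}$. Collecting the $\rho_x^{j}$ power across all $k$ then produces exactly three buckets: $k=j+1$ via the leading piece, $k=j$ via the middle piece, and $k=m,\ldots,j-1$ via the tail piece with $q=j$. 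Writing these three pieces out using $\vartheta_i(\xi,\omega)=\sum_l\omega_l n_{l,i}(\xi)$ from \eqref{thetaieq} (which also encodes $\partial\rho/\partial x_i$ by Lemma \ref{lcderivativesrho}) produces term by term the four groups of summands in \eqref{eqderfordelta}: the $(j+1)a_{j+1}\vartheta_i$ and $\delta a_{j+1}/\delta\omega_k$ terms come from $k=j+1$, the $\delta a_j/\delta\xi_i$ and $\delta a_j/\delta\omega_k$ terms from $k=j$, and the $b_{l,i,j-k}\,\delta a_k/\delta\xi_l$ terms from $k<j$.

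The calculation is essentially mechanical: the analytic content is entirely absorbed by Corollary \ref{tcfpartialdefivatives}, and termwise differentiation is licensed by the strongness of the expansion. The one point that requires attention is bookkeeping, namely verifying that the three buckets for $k$ exhaust all contributions to the coefficient of $\rho_x^{j}$ and that the index shift $q\mapsto j-k$ in the subscripts of the $b_{l,i,\cdot}$ is consistent. Once this is checked, no further argument is required, and the stated formula follows immediately.
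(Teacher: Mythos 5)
Your proposal is correct and takes essentially the same route as the paper's (extremely terse) proof, which simply says to insert \eqref{thetaieq} into the formula from Corollary \ref{tcfpartialdefivatives}. You expand this into the precise bookkeeping—identifying $a_{j,i}$ as the $j$-th coefficient of $\partial\phi/\partial x_i$ and collecting the contributions from the terms $k=j+1$, $k=j$, and $m\le k\le j-1$ of the expansion of $\phi$—which is exactly what the paper implicitly relies on.
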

\begin{proof} Insert \eqref{thetaieq} in the formula from Corollary \ref{tcfpartialdefivatives}.
\end{proof}
\begin{example}
In the case of a hypersurface $\Sigma$, i.e., when $d=1$, the expression \eqref{eqderfordelta} simplifies because the $\delta$-derivatives with respect to $\omega_{i}$ do not occur in this case. We can write $\phi(x)=\phi^{+}(x)+\phi^{-}(x)$ where $\phi^{\pm}$ vanishes according to whether $x$ lies ``inside'' or ``outside'' $\Sigma$. We also have $\omega_{x}=1$ if $x$ lies in the region toward which the unit normal $\mathbf{n}$ points, and $\omega_{x}=-1$ otherwise. An arbitrary thick multilayer $g\delta^{[j]}_{\ast,\Sigma}$ on the hypersurface has the form
\[
\langle g\delta^{[j]}_{\ast,\Sigma},\phi \rangle= \frac{1}{2} \left( \langle g^{+}(\xi),a^{+}_{j}(\xi) \rangle + \langle g^{-}(\xi),a^{-}_{j}(\xi) \rangle \right),
\]
where $g^{\pm}\in\mathcal{D}'(\Sigma)$ and $\phi^{\pm}(x) \sim \sum_{j=m}^{\infty}a_j^{\pm}(\xi_{x})\rho_{x}^{j}$ as $\rho_{x} \rightarrow 0$. Since there is no longer any risk of confusion, we can simply write $\delta{a^{\pm}_{j}}/\delta x_i= \delta{a^{\pm}_{j}}/\delta {\xi_i}$. We also have $\vartheta=\pm1$ according to the considered case.
Therefore, we have the following general formula for the partial derivatives of a thick multilayer on the hypersurface $\Sigma$,
\begin{align*}
\left\langle \frac{\partial} {\partial x_i}\left(g\delta_{\ast,\Sigma}^{[ j]  }\right),\phi\right\rangle
=&
 \frac{j+1}{2} \:\left(\left\langle g ^{-}, a^{-}_{j+1}n_{i}  \right\rangle - \left\langle g ^{+}, a^{+}_{j+1}n_{i}  \right\rangle\right)
 \\
&
 -
\frac
{1}{2}\Big(\big\langle g ^{+}, \frac{\delta a^{+}_{j}}{\delta x_{i} }\big\rangle+\big\langle g ^{-}, \frac{\delta a^{-}_{j}}{\delta x_{i} }\big\rangle\Big)
 \\
&
 -
\frac
{1}{2}\sum_{k=m}^{j-1} \sum_{l=1}^{n}\Big(\big\langle g ^{+}, b_{l,i,j-k}^{+} \frac{\delta a^{+}_k}{\delta x_l}\big\rangle+\big\langle g ^{-},  b_{l,i,j-k}^{-} \frac{\delta a^{-}_k}{\delta x_l}\big\rangle\Big).
\end{align*}
In particular, we obtain
\begin{align}
\label{eqforexamples}
\left\langle \frac{\partial \delta_{\ast,\Sigma}^{[ j] }} {\partial x_i},\phi\right\rangle
= & \frac{1}{2}\int_{\Sigma}\left(-\frac{\delta a^{+}_{j}}{\delta x_{i} }-\frac{\delta a^{-}_{j}}{\delta x_{i} }+(j+1) (a_{j+1}^{-}(\xi)-a^{+}_{j+1}(\xi))n_{i}(\xi) \right.
\\
& \notag
\qquad \qquad
- \left. \sum_{k=m}^{j-1} \sum_{l=1}^{n}\left(b_{l,i,j-k}^{+}(\xi)\frac{\delta a^{+}_k}{\delta x_l}+ b_{l,i,j-k}^{-}(\xi)\frac{\delta a^{-}_k}{\delta x_l} \right)  \right) \mathrm{d}\sigma(\xi).
\end{align}
\end{example}

\begin{example}
Let $\Sigma\subset \mathbb{R}^{n}$ be a compact hypersurface with normal unit vector $\mathbf{n}$. Let
\[\mu_{ik}=\frac{\delta n_{k}}{\delta x_{i}},\]
where as usual $n_k$ are the coordinate functions of $\mathbf{n}$.  The functions $\mu_{ij}$ are the matrix coefficients of the second fundamental form of $\Sigma$ expressed in the Cartesian coordinates of the surrounding space $\mathbb{R}^{n}$. The mean curvature of the hypersurface can then be written as
\[
H=  \frac{1}{n-1}\sum_{i=1}^{n} \mu_{ii}.
\]
If we select $\phi_{k}\in\mathcal{E}_{\ast,\Sigma}(\mathbb{R}^{n})$ having asymptotic expansion $\phi^{\pm}_{k}(x)\sim n_{k}(\xi_x)$ as $\rho_{x}\to 0$, then 
\eqref{eqforexamples} yields
\begin{equation}
\label{formulaIIfundamentalformintegrals}
\left\langle \frac{\partial \delta_{\ast,\Sigma}^{[ j] }} {\partial x_i},\phi_{k}\right\rangle
= 
\begin{cases}
\displaystyle -\int_{\Sigma} \mu_{ik}(\xi) \mathrm{d}\sigma(\xi) &\mbox{ if } j=0,\\
0  &\mbox{ if } j\leq -1,
\\
\displaystyle - \frac{1}{2} \sum_{l=1}^{n}\int_{\Sigma} (b_{l,i,j}^{+}(\xi)+b^{-}_{l,i,j}(\xi))\mu_{l,k}(\xi) \mathrm{d}\sigma(\xi) &\mbox{ if } j\geq 1.
\end{cases}
\end{equation}
We thus get
\[
\sum_{i=1}^{n}\left\langle \frac{\partial \delta_{\ast,\Sigma}}{\partial x_i},\phi_{i}\right\rangle
=(1-n)\int_{\Sigma} H(\xi)\mathrm{d}\sigma(\xi),
\]
and, in particular,
\[
\int_{\Sigma} H(\xi)\mathrm{d}\sigma(\xi)=\frac{1}{1-n} \left\langle\nabla \delta_{\ast,\Sigma} \, , \mathbf{n}\right\rangle ,
\]
if we interpret $\mathbf{n}$ as the vector field $\mathbf{n}(\xi_x)$ defined on a neighborhood of $\Sigma$.
\end{example}

\begin{example}
Let us conclude the article with a hands-on simple example where we compute \eqref{formulaIIfundamentalformintegrals}. We consider the sphere $\Sigma= r\mathbb{S}^{n-1}$ in $\mathbb{R}^{n}$. Naturally, we only need to treat $j\geq0$. The normal unit vector at  $\xi\in r\mathbb{S}^{n-1}$ is simply 
$\mathbf{n}(\xi)= \xi/r= (\xi_1/r, \dots, \xi_{n}/r)$. If $x\neq0$, then $\xi_{x}=\pi(x)=rx/\|x\|$. Hence,
\[
\mu_{ik}= \frac{\delta n_{k}}{\delta x_i}= \frac{1}{r}\left(\delta_{ik}- \frac{\xi_i \xi_k}{r^2}\right)=\frac{1}{r}\left(\delta_{ik}- n_i n_k\right),
\]
and so
\[
\left\langle \frac{\partial \delta_{\ast,r\mathbb{S}^{n-1}}} {\partial x_i},\phi_{k}\right\rangle=-r^{n-2}\left(|\mathbb{S}^{n-1}|  \delta_{ik} - \int_{\mathbb{S}^{n-1}}\xi_i\xi_k \mathrm{d}\sigma(\xi)\right)=\delta_{ik} r^{n-2} |\mathbb{S}^{n-1}|\left( \frac{1}{n}-1\right).
\]

For $j\geq 1$, we need to find the coefficients $b^{\pm}_{l,i.j}$. We use \eqref{eqdef b coefficient} for it. We have that
\[ \frac{\partial \xi_{l}}{\partial x_i}= r\left(\frac{\delta_{il}}{\|x\|}- \frac{x_i x_l}{\|x\|^{3}}\right)
= \frac{r}{\|x\|}\left(\delta_{il} - \frac{\xi_i \xi_l}{r^{2}}\right).
\]
Since $\rho=\rho_x= |\|x\|-r|$, 

\[
\left(\frac{\partial \xi_{l}}{\partial x_i}\right)^{\pm}= \frac{\delta_{il} - \xi_i \xi_l/r^2}{1\pm \rho/r} .
\]
Expanding into geometric series, we obtain
\[
b^{\pm}_{l,i,j}= \left(\delta_{il}-\frac{\xi_i \xi_l}{r^2} \right) \left(\mp \frac{1} {r}\right)^{j} .
\]
If $j$ is odd, we directly obtain 
\[
\left\langle \frac{\partial \delta_{\ast,r\mathbb{S}^{n-1}}^{[ j] }} {\partial x_i},\phi_{k}\right\rangle=0.\]
When $j$ is even

\begin{align*}
\left\langle \frac{\partial \delta_{\ast,r\mathbb{S}^{n-1}}^{[ j] }} {\partial x_i},\phi_{k}\right\rangle& =- \frac{1}{r^{j+1}}\sum_{l=1}^{n} \int_{r\mathbb{S}^{n-1}} \left(\delta_{il}-\frac{\xi_i\xi_l}{r^2}\right)\left(\delta_{lk}-\frac{\xi_l\xi_k}{r^2}\right) \mathrm{d}\sigma(\xi)
\\
&= \delta_{ik} r^{n-j-2} |\mathbb{S}^{n-1}|\left( \frac{1}{n}-1\right)+ r^{n-j-2} \int_{\mathbb{S}^{n-1}} \xi_i \xi_k\mathrm{d}\sigma(\xi)
\\
& \qquad \qquad -  r^{n-j-2}\sum_{l=1}^{n} \int_{\mathbb{S}^{n-1}} \xi_i \xi_l^2 \xi_k \mathrm{d}\sigma(\xi)
\\
&
=\delta_{ik} r^{n-j-2} |\mathbb{S}^{n-1}|\left( \frac{1}{n}-1\right).
\end{align*}
\end{example}	

\section{Conclusion}
This article generalizes the theory of thick distributions from \cite{YE2013,Yang2022} by considering general ``singular submanifolds'' instead of just singular curves or singular points inside the domains of the test functions. Given a closed submanifold $\Sigma$ of $\mathbb{R}^{n}$, we have introduced the test function space  $\mathcal{D}_{\ast, \Sigma}(\Omega)$, which consists of smooth functions on $\Omega\setminus\Sigma$ admitting asymptotic expansions of the form \eqref{teq1}, where $(\xi,\omega,\rho)$ are tubular coordinates on an $\epsilon$-neighborhood of $\Sigma$. This test function space is a Montel space and is invariant under the action of partial differential operators. We have computed asymptotic expansions for the partial derivatives of the elements of $\mathcal{D}_{\ast, \Sigma}(\Omega)$ when approaching the singular submanifold. 

The dual space $\mathcal{D}'_{\ast, \Sigma}(\Omega)$ is the space of thick distributions along $\Sigma$. It carries a natural action of the partial derivative operators and its Moyal algebra is the space $\mathcal{E}_{\ast, \Sigma}(\Omega)$. There is a natural projection from $\mathcal{D}'_{\ast, \Sigma}(\Omega)$ onto the space of Schwartz distributions on $\Omega$ which commutes with partial derivatives. The kernel of this projection was determined. We introduced regularizations of functions via tubular finite-part integrals and constructed an embedding of $\mathcal{E}_{\ast, \Sigma}(\Omega)$ into $\mathcal{D}'_{\ast, \Sigma}(\Omega)$. This embedding does not commute with partial derivatives. We also defined new thick delta distributions along $\Sigma$ and, more generally, thick multilayers. Their projections onto the Schwartz distribution space and their partial derivatives were found. 

The theory of thick distributions might have potential applications in physics and PDE theory. As indicated in the Introduction, several applications of the theory of distributions with thick points and thick curves have been devised in recent years. The new theory presented in this article provides the possibility to treat thick submanifolds.

\end{document}